\newcommand\version{May 31, 2022}
\newtheorem{theorem}{Theorem}
\newtheorem{lemma}[theorem]{Lemma}
\theoremstyle{definition}
\theoremstyle{remark}
\newtheorem{remark}[theorem]{Remark}
\renewcommand{\epsilon}{\varepsilon}
\renewcommand{\phi}{\varphi}
\newcommand{\R}{\mathbb{R}}
\newcommand{\Sph}{\mathbb{S}}
\newcommand{\Z}{\mathbb{Z}}
\DeclareMathOperator{\dist}{dist}
\begin{document}

\title[Degenerate stability of some Sobolev inequalities --- \version]{Degenerate stability of some Sobolev inequalities}

\author{Rupert L. Frank}
\address[Rupert L. Frank]{Mathematisches Institut, Ludwig-Maximilans Universit\"at M\"unchen, The\-resienstr.~39, 80333 M\"unchen, Germany, and Munich Center for Quantum Science and Technology, Schellingstr.~4, 80799 M\"unchen, Germany, and Mathematics 253-37, Caltech, Pasa\-de\-na, CA 91125, USA}
\email{r.frank@lmu.de}

\renewcommand{\thefootnote}{${}$} \footnotetext{\copyright\, 2022 by the author. This paper may be reproduced, in its entirety, for non-commercial purposes.}

\begin{abstract}
	We show that on $\mathbb S^1(1/\sqrt{d-2})\times\mathbb S^{d-1}(1)$ the conformally invariant Sobolev inequality holds with a remainder term that is the fourth power of the distance to the optimizers. The fourth power is best possible. This is in contrast to the more usual vanishing to second order and is motivated by work of Engelstein, Neumayer and Spolaor. A similar phenomenon arises for subcritical Sobolev inequalities on $\mathbb S^d$. Our proof proceeds by an iterated Bianchi--Egnell strategy.
\end{abstract}

\maketitle

\section{Introduction and main results}

\subsection{Motivation}

In a fundamental paper, Bianchi and Egnell \cite{BiEg} answer a question by Br\'ezis and Lieb \cite{BrLi} and show that the Sobolev inequality on $\R^d$ holds with a remainder term involving the distance to the optimizers. More precisely, for some $c_d>0$ and all $u\in\dot H^1(\R^d)$,
\begin{equation}
	\label{eq:bianchiegnell}
	\|\nabla u\|_2^2 - S_d \| u\|_{2d/(d-2)}^2 \geq c_d \inf_{Q\in\mathcal Q} \|\nabla (u-Q)\|_2^2 \,.
\end{equation}
Here $S_d$ denotes the optimal constant in the Sobolev inequality on $\R^d$ and $\mathcal Q$ the set of its optimizers. Importantly, the right side in \eqref{eq:bianchiegnell} involves the \emph{square} of the distance to the set of optimizers, and simple examples show that this is best possible, in the sense that the inequality does not hold with a right side equal to a constant times $\|\nabla u\|_{2}^{2-\alpha} \inf_{Q\in\mathcal Q} \|\nabla (u-Q)\|_2^\alpha$ for $\alpha<2$.

In the last two decades there has been an abundance of stability results for various functional inequalities. Examples include, for instance, isoperimetric inequalities \cite{FuMaPr,FiMaPr,CiLe,FiFuMaMiMo}, $L^p$-Sobolev inequalities \cite{CiFuMaPr,FiNe,Ne,FiZh}, fractional Sobolev inequalities \cite{ChFrWe}, Gagliardo--Nirenberg inequalities \cite{BoDoNaSi}, Brunn--Minkowski, concentration and rearrangement inequalities \cite{FiMaPr0,FiJe,FiMaMo,Ch,FrLi}, eigenvalue inequalities \cite{Na,CaFrLi,BrDPVe,KaNaPoSt,AlKrNe}, solutions to elliptic equations with critical exponents \cite{CiFiMa,FiGl,DeSuWe}, Young's inequality \cite{Ch1}, Hausdorff--Young inequality \cite{Ch2}, etc. Many of these works use strategies inspired by the paper of Bianchi--Egnell and in essentially all works (exceptions being \cite{FiZh,FiJe} and one version of a refined H\"older inequality in \cite{CaFrLi}) the remainder term is quadratic in the distance to the optimizers.

Our work is motivated by the recent paper \cite{EnNeSp} of Engelstein, Neumayer and Spolaor concerning a quantitative version of a Sobolev-type inequality in conformal geometry. We recall that given a closed manifold $M$ of dimension $d\geq 3$ and a class $\mathcal C$ of conformally equivalent metrics, there is a constant $Y(M,\mathcal C)>-\infty$ such that for all $g\in\mathcal C$ and all $u\in H^1(M)$,
$$
\mathcal E_g[u] \geq Y(M,\mathcal C) \|u\|_{L^{2d/(d-2)}(M,v_g)}^2 \,.
$$
Here
\begin{equation}
	\label{eq:defe}
	\mathcal E_g[u]  := \int_M \left( |\nabla_g u|_g^2 + \frac{d-2}{4(d-1)}R_g u^2 \right)dv_g
\end{equation}
with $R_g$ the scalar curvature of $(M,g)$. The quantities $(4(d-1)/(d-2))\mathcal E_g[u]$ and $\|u\|_{L^{2d/(d-2)}(M,v_g)}^{2d/(d-2)}$ have the geometric meaning of the total scalar curvature and the volume, respectively, of the metric $u^{4/(d-2)}g$. The main result of \cite{EnNeSp} is that, if $(M,\mathcal C)$ is not conformally equivalent to the round sphere, then there are constants $c>0$ and $\alpha\geq 2$, depending on $(M,\mathcal C)$, such that for all $0\leq u\in H^1(M)$,
$$
\mathcal E_g[u] - Y(M,\mathcal C) \|u\|_{L^{2d/(d-2)}(M,v_g)}^2 \geq c \inf_{Q\in\mathcal Q} \frac{\|u-Q\|_{H^1}^{\alpha}}{\|u\|_{H^1}^{\alpha-2}} \,.
$$
Remarkably, while generically (in a sense made precise in \cite{EnNeSp}) one can take $\alpha=2$, there are examples in any dimension $d\geq 3$ where one needs to take some $\alpha\geq 4$. The simplest of these examples is
\begin{equation}
	\label{eq:productmfd}
	M = \Sph^1(\tfrac{1}{\sqrt{d-2}}) \times \Sph^{d-1}(1)
\end{equation}
with its standard product metric. Here $\Sph^n(r)\subset\R^{n+1}$ denotes the $n$-dimensional sphere of radius $r>0$.

The proof in \cite{EnNeSp} proceeds via a \L ojasiewicz inequality and, as far as we see, does not easily provide a specific value of $\alpha$ for a given $(M,\mathcal C)$. Therefore we think it is of interest to determine the optimal $\alpha$ in the example \eqref{eq:productmfd}. It turns out that $\alpha=4$, so this provides one of the few examples of a stability estimate with an optimal, nonquadratic remainder term. 

We believe that the underlying phenomenon and our way of handling it is of some interest even beyond the concrete example \eqref{eq:productmfd}. The basic reason for why there is \emph{no quadratic stability} is that the minimizer is degenerate in the sense that there is a zero mode of the Hessian of the minimization problem that does not come from symmetries of the set of minimizers. The reason for why there is \emph{quartic stability} is that a secondary nondegeneracy condition is satisfied. We stress that this reason for degenerate stability is different from that in the case of the $L^p$-Sobolev inequality for $2<p<d$ \cite{FiZh}.

The way we deal with the zero mode of the Hessian and the secondary nondegeneracy condition can be thought of as an iterated Bianchi--Egnell strategy. Namely, while Bianchi and Egnell project on the nearest optimizer, we do the same, but then zoom further in and project on the nearest zero-mode of the Hessian. This argument bears some vague resemblance to how in \cite{FrKoKo} we handled an asymptotic minimization situation where the expected leading term vanishes. We have not encountered this kind of argument in the context of stability of functional inequalities and we hope that it will be of use in related problems. 

The argument, except for the verifcation of the secondary nondegeneracy condition, is of a general nature, but we refrain from trying to formulate it abstractly. Instead, we illustrate it in three different circumstances of increasing technical difficulty.


\subsection{Main results}

We fix $2<q<\infty$ and set 
$$
S := \frac{(2\pi)^2}{q-2} \,.
$$
Then, for all $u\in H^1(\R/\Z)$,
\begin{equation}
	\label{eq:sob1d}
	\int_0^1 \left( (u')^2 + S u^2\right)dt \geq S \left( \int_0^1 |u|^q\,dt \right)^{2/q}.
\end{equation}
The constants in this inequality are optimal and equality holds if and only if $u$ is constant. These facts are well-known and we provide references before Lemma \ref{firstdecomp}.

The following theorem answers the stability question for this inequality involving the $H^1$ distance to the set of optimizers, that is, the set of constant functions.

\begin{theorem}\label{main}
	Let $2<q<\infty$. Then there is a constant $c_q>0$ such that for all $u\in H^1(\R/\Z)$,
	\begin{align*}
		\int_0^1 \left( (u')^2 + S u^2\right)dt - S \left( \int_0^1 |u|^q\,dt \right)^{2/q}
		& \geq c_q\, \frac{ \left( \int_0^1 \!\left( \left(u'\right)^2 \! + \! S \left(u-\int_0^1 u\,ds \right)^2\right)dt \right)^2}{\int_0^1 \!\left( \left(u'\right)^2 \! + \! S u^2\right)dt}\,.
	\end{align*}
\end{theorem}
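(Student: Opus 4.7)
The plan is to execute the iterated Bianchi--Egnell strategy flagged in the introduction. After normalizing with $u \mapsto \lambda u$ and $u \mapsto -u$, I write $u = c + v$ with $c := \int_0^1 u\,dt$ and $\int_0^1 v\,dt = 0$, and may assume $c = 1$ by the scaling. A standard compactness-and-contradiction argument lets me reduce to the regime where $\|v\|_{H^1}$ is arbitrarily small: otherwise a sequence of counterexamples $u_n = 1 + v_n$ with $\|v_n\|_{H^1}$ bounded below would, after Rellich extraction in $L^q$, produce a nonconstant optimizer of \eqref{eq:sob1d}, contradicting the known rigidity.

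In the small-$v$ regime I further decompose $v = v_1 + v_2$ where $v_1$ is the $L^2$-projection onto $\mathrm{span}\{\cos(2\pi t), \sin(2\pi t)\}$ and $v_2$ is orthogonal to it. The quadratic form $\int_0^1 (v')^2\,dt - (2\pi)^2 \int_0^1 v^2\,dt$ vanishes on $v_1$ (this is the degeneracy), whereas the spectral gap on higher harmonics yields $\int_0^1 (v_2')^2\,dt - (2\pi)^2 \|v_2\|_2^2 \geq \tfrac{3}{4}\int_0^1 (v_2')^2\,dt \gtrsim \|v_2\|_{H^1}^2$. Since $H^1(\R/\Z) \hookrightarrow L^\infty$ and $c = 1$, the function $u$ stays uniformly positive, so $x\mapsto x^q$ is smooth on its range and $S\bigl(\int_0^1 u^q\,dt\bigr)^{2/q}$ admits a Taylor expansion to fourth order in $v$.

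The crux is to collect the quartic-in-$v_1$ contribution. Using $\int_0^1 v_1^3\,dt = 0$ (odd trigonometric integrals), $\int_0^1 v_1^4\,dt = \tfrac{3}{2}\|v_1\|_2^4$, and the concavity correction from the exponent $2/q < 1$, a direct computation gives
\begin{align*}
\text{deficit}[u] \;\geq\; \Bigl(\int_0^1 (v_2')^2\,dt - (2\pi)^2 \|v_2\|_2^2\Bigr) \;+\; \frac{(2\pi)^2(q-1)(q+1)}{8}\,\|v_1\|_2^4 \;-\; (\text{cross and remainder terms}).
\end{align*}
The strict positivity of $(q-1)(q+1)/8$ for every $q > 2$ is precisely the ``secondary nondegeneracy condition'' that powers the quartic stability on the zero mode.

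The remaining task is to absorb the cross terms $\int_0^1 v_1^2 v_2\,dt$, $\int_0^1 v_1 v_2^2\,dt$, etc., together with the fifth-and-higher-order Taylor remainder. Using $\|v\|_\infty \lesssim \|v\|_{H^1}$ and Young's inequality, each of these is dominated by $\varepsilon \|v_1\|_2^4 + C_\varepsilon \|v_2\|_{H^1}^2$, leaving $\text{deficit}[u] \gtrsim \|v_1\|_2^4 + \|v_2\|_{H^1}^2$. Since $d^2 \asymp \|v_1\|_2^2 + \|v_2\|_{H^1}^2$ and $\|v_2\|_{H^1}^2 \gtrsim \|v_2\|_{H^1}^4$ in the small regime, one concludes $\text{deficit}[u] \gtrsim d^4 \asymp d^4/\|u\|_{H^1}^2$, which is the claimed bound. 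The principal obstacles are (i) checking the strict positivity of the quartic coefficient on the zero mode, and (ii) preserving the \emph{asymmetric} bound $\|v_1\|_2^4 + \|v_2\|_{H^1}^2$ through the cross-term estimates --- replacing it by the symmetric $\|v\|_{H^1}^4$ would gain nothing, while replacing the $v_2$-quadratic piece by something quartic is impossible.
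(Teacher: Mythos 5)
Your overall architecture coincides with the paper's (Lemmas \ref{firstdecomp}--\ref{seconddecompenergy}): normalize, reduce by compactness to the near-constant regime, decompose $v$ into its zero-mode projection $v_1$ and the remainder $v_2$, expand the deficit to fourth order, and bound the $v_2$ contribution by the spectral gap. Your raw quartic coefficient $\frac{(2\pi)^2(q-1)(q+1)}{8}\|v_1\|_2^4$ for the pure-$v_1$ part is also what the paper's computation gives.

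There is, however, a genuine gap in the handling of the cross term $\int_0^1 v_1^2 v_2\,dt$, and your ``secondary nondegeneracy condition'' is misidentified. Since $v_1^2 \in \mathrm{span}\{1,\cos 4\pi t,\sin 4\pi t\}$ and $v_2\perp 1$, this cross term equals (up to a fixed constant) $\|v_1\|_2^2$ times the coefficient of the second Fourier mode in $v_2$. You propose to dominate it by $\varepsilon\|v_1\|_2^4 + C_\varepsilon\|v_2\|_{H^1}^2$ and then absorb both pieces. But the spectral gap constant on the mode $\cos 4\pi t$ is a fixed finite number ($6\pi^2$ per unit squared Fourier coefficient), so the absorption requires $C_\varepsilon$ below that, which forces $\varepsilon$ to be bounded \emph{below}; whether the resulting $\varepsilon$ is still below the quartic coefficient is a discriminant condition you never verify, and it is exactly the crux of the argument. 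The paper makes the check transparent by completing the square in that Fourier coefficient (Step 3 of Lemma \ref{seconddecompenergy}): the quartic gain is $\frac{(q+1)(q-1)(q-2)}{32}S$, the completion-of-square loss is $\frac{(q-1)^2(q-2)}{96}S$, and the resulting net constant $\frac{(q+2)(q-1)(q-2)}{48}S>0$ (hence $\frac{(q+2)(q-2)}{12(q-1)}$ for the theorem's ratio) is what actually powers the quartic stability. Your condition --- positivity of the raw quartic coefficient $(q-1)(q+1)/8$ --- is necessary but is \emph{not} the reason the Young's absorption succeeds; one could imagine a variant of the problem where the raw quartic coefficient is positive yet the net coefficient after the cross-term loss is not. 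Remark \ref{optimality} underlines the point: the optimal near-extremizer carries a nonzero $v_2$ component $h(t)=\frac{q-1}{12}\cos 4\pi t$ produced precisely by completing the square, so the cross term cannot simply be discarded but must be balanced quantitatively against the spectral gap.
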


\emph{Remarks.} (a) Note that
$$
\int_0^1 \!\left( \left(u'\right)^2 \! + \! S \left(u- \textstyle{\int_0^1 u\,ds} \right)^2\right)dt
= \inf_{c\in\R} \int_0^1 \left( ((u-c)')^2 + S (u-c)^2\right)dt \,,
$$
so the right side in the theorem involves an $H^1$ distance of $u$ to the set of optimizers.\\ 
(b) The right side is the fourth power of the distance to the set of optimizers. In Remark \ref{optimality} we show that the power four is best possible.\\
(c) Just like in the proof of the Bianchi--Egnell inequality \eqref{eq:bianchiegnell} in \cite{BiEg}, we will argue by compactness and do not get a computable value of $c_q$.

\medskip

Our second result is a higher-dimensional version of Theorem \ref{main}. Let $d\geq 2$ and $2<q<2d/(d-2)$. Then, for all $u\in H^1(\Sph^d)$,
\begin{align}
	\label{eq:sobsubcrit}
	\int_{\Sph^d} \left( |\nabla u|^2 + \frac{d}{q-2} u^2\right)d\omega \geq \frac{d}{q-2}\, |\Sph^d|^{1-2/q} \left( \int_{\Sph^d} |u|^q\,d\omega \right)^{2/q}.
\end{align}
The constants in this inequality are optimal and equality holds if and only if $u$ is constant. We provide references for these facts before Lemma \ref{firstdecompsphere}. Here is the analogue of Theorem \ref{main} for this inequality.

\begin{theorem}\label{main3}
	Let $d\geq 2$ and $2<q<2d/(d-2)$. Then there is a constant $c_{d,q}>0$ such that for all $u\in H^1(\Sph^d)$,
	\begin{align*}
		& \int_{\Sph^d} \left( |\nabla u|^2 + \frac{d}{q-2} u^2\right)d\omega - \frac{d}{q-2}|\Sph^d|^{1-2/q} \left( \int_{\Sph^d} |u|^q\,d\omega \right)^{2/q} \\
		& \geq c_{d,q}\, \frac{ \left( \int_{\Sph^d} \left( |\nabla u|^2 + \frac{d}{q-2} \left(u-|\Sph^d|^{-1} \int_{\Sph^d} u\,d\omega \right)^2\right)d\omega \right)^2}{\int_{\Sph^d} \left( |\nabla u|^2 + \frac{d}{q-2} u^2\right)d\omega}\,.
	\end{align*}
\end{theorem}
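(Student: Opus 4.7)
\medskip
\emph{Proof proposal.}
The plan is to adapt the iterated Bianchi--Egnell argument of Theorem \ref{main} to the spherical setting. Arguing by contradiction and compactness, suppose the estimate fails along a sequence $u_n$ with $\|u_n\|_{H^1_S}^2 := \int_{\mathbb S^d}\left(|\nabla u_n|^2 + \tfrac{d}{q-2}u_n^2\right)d\omega = 1$. The subcritical compact embedding $H^1(\mathbb S^d)\hookrightarrow L^q(\mathbb S^d)$, together with an analogue of Lemma \ref{firstdecompsphere}, forces the mean-zero part $v_n := u_n - |\mathbb S^d|^{-1}\int u_n\,d\omega$ to satisfy $\|v_n\|_{H^1_S}\to 0$ while $|\mathbb S^d|^{-1}\int u_n\,d\omega \to a_0 > 0$; rescaling to $a_0 = 1$ localizes the analysis near $u\equiv 1$.

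The decisive structural fact is that the Hessian of the deficit at $u\equiv 1$ equals $\int(|\nabla v|^2 - dv^2)\,d\omega$ on mean-zero $v$. By the spectrum of $-\Delta_{\mathbb S^d}$, this form vanishes exactly on $E_1 := \mathrm{span}\{x_1|_{\mathbb S^d},\ldots,x_{d+1}|_{\mathbb S^d}\}$ (the eigenspace with eigenvalue $d$) and is coercive on $E_1^\perp$ by the spectral gap coming from the next eigenvalue $2(d+1)$. I therefore decompose $v = \phi + w$ with $\phi\in E_1$ and $w\perp E_1$, and set $\alpha := \|\phi\|_{H^1_S}$, $\beta := \|w\|_{H^1_S}$. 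The quadratic part of the deficit thus controls $\beta^2$ but is blind to $\alpha$, which is the source of the degeneracy.

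To recover a quartic lower bound in $\alpha$, I Taylor-expand $(\int(1+v)^q\,d\omega)^{2/q}$ to fourth order. Two features conspire. First, the cubic contribution at $v=\phi$, namely $\int\phi^3\,d\omega$, vanishes for every $\phi\in E_1$ because degree-one spherical harmonics are odd under the antipodal map. Second, the resulting quartic form on $E_1$ --- a linear combination of $\int\phi^4\,d\omega$ and $|\mathbb S^d|^{-1}(\int\phi^2\,d\omega)^2$ with explicit coefficients of opposite signs --- is strictly positive on $E_1\setminus\{0\}$: by $\mathrm{SO}(d+1)$-invariance it suffices to check $\phi = t\,x_1|_{\mathbb S^d}$, and the moment identities $\int x_1^2\,d\omega = |\mathbb S^d|/(d+1)$, $\int x_1^4\,d\omega = 3|\mathbb S^d|/((d+1)(d+3))$ reduce the positivity to the algebraic inequality $(q-1)(d+3)>(q-3)(d+1)$, i.e.\ $q+d>0$, which always holds. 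Consequently $\mathcal F[1+\phi]\geq C\alpha^4$ for some $C>0$.

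Combining the two lower bounds and absorbing the mixed cubic/quartic cross terms $\int\phi^2 w\,d\omega$, $\int\phi w^2\,d\omega$, $\int\phi^2 w^2\,d\omega$, $\ldots$ by H\"older and Young (permissible because $\alpha+\beta\to 0$) yields $\mathcal F[u_n]\geq c(\alpha_n^4+\beta_n^2)(1-o(1))$. Since $\|v_n\|_{H^1_S}^4\asymp(\alpha_n^2+\beta_n^2)^2\leq C(\alpha_n^4+\beta_n^2)$ once $\alpha_n,\beta_n$ are small, this contradicts the assumed vanishing of the ratio and closes the argument. The step specific to the spherical geometry --- and the main obstacle --- is the verification of the secondary (quartic) nondegeneracy on the $(d+1)$-dimensional kernel $E_1$; the compactness, spectral-gap, and cross-term absorption ingredients follow the same template as in Theorem \ref{main}.
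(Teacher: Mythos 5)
Your outline correctly identifies the kernel $E_1$ of the Hessian, correctly invokes oddness to kill $\int_{\Sph^d}\phi^3\,d\omega$, and correctly computes that the quartic form at $w=0$ on $E_1$ is positive (your reduction to $q+d>0$ matches the coefficient $\tfrac{d(q-1)(q+d)}{2(d+1)^2(d+3)}|\Sph^d|$ in the paper). The gap is in the step you describe as ``absorbing the mixed cubic/quartic cross terms \ldots by H\"older and Young (permissible because $\alpha+\beta\to0$).'' The cross term $\int\phi^2 w\,d\omega$, carrying a coefficient proportional to $-d(q-1)$, is of size $\alpha^2\beta$, which in the critical regime $\beta\sim\alpha^2$ is exactly the same order as both $\alpha^4$ and $\beta^2$: smallness of $\alpha+\beta$ gives no room whatsoever. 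Young's inequality turns the three-term sum into a quadratic form in $(\beta,\alpha^2)$, and its positivity is a discriminant condition --- and that discriminant condition \emph{is} the secondary nondegeneracy, not the naive positivity of the quartic at $w=0$. Carrying it out, as the paper does in Step~3 of Lemma~\ref{seconddecompenergysphere} by completing the square in the single Fourier coefficient $a_{2,0}$ along the degree-two harmonic $\omega_{d+1}^2-\tfrac1{d+1}$, subtracts $\tfrac{d^3(q-1)^2}{2(d+1)^2(d+2)(d+3)}|\Sph^d|$ from the coefficient you found and leaves $\tfrac{d(q-1)(2d-(d-2)q)}{2(d+1)(d+2)(d+3)}|\Sph^d|$, whose positivity requires the subcriticality $q<2d/(d-2)$ and not merely $q+d>0$. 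This is not cosmetic: the near-optimizers $u_\epsilon=1+\epsilon g+\epsilon^2 h$ in Remark~\ref{optimality3} carry a nonzero degree-two component $h$, so for fixed $\phi$ the deficit is strictly decreased by taking $w\neq0$, and a lower bound at $w=0$ does not yield a lower bound over all $w$.

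A second issue you do not address is that for $2<q<4$ --- which is forced whenever $d\geq4$ and also occurs for $d=2,3$ --- the map $\tau\mapsto|1+\tau|^q$ is not $C^4$ near $\tau=-1$, so the fourth-order Taylor expansion of $\int_{\Sph^d}|1+v|^q\,d\omega$ produces remainders of size $\int|v|^q$, which are not $O(\|v\|^5)$ without pointwise smallness of $v$; and here $v=\mu_n(g+R_n)$ with $R_n\to0$ only in $H^1(\Sph^d)$, not in $L^\infty$. The paper handles this in Steps~4--7 of Lemma~\ref{seconddecompenergysphere}: it splits $R_n=S_n+T_n$ into its latitudinal average $S_n$ and a remainder $T_n$ that is removed by a spectral-gap estimate, and then splits the sphere into small polar caps --- where the pointwise bound $|S_n(\omega)|\lesssim\delta(\omega)^{-(d-2)/2}\|S_n\|$ degenerates and only a second-order expansion is used --- and the bulk, where the fourth-order expansion is legitimate. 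Without some such device the expansion you invoke is not justified outside the range $q\geq4$.
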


\emph{Remarks.} The same remarks (a), (b) and (c) on Theorem \ref{main} are relevant here, too. Optimality is proved in Remark \ref{optimality3}.

\medskip

Our third and final result concerns the example \eqref{eq:productmfd}. In this case it is known and implicitly contained in Schoen's work \cite{Sc} (see Lemma \ref{firstdecompmfd} below) that for all $u\in H^1(M)$,
$$
\mathcal E_g[u] \geq Y \left( \int_M |u|^{2d/(d-2)}\,dv_g \right)^{(d-2)/d}
$$
with optimal constant
$$
Y := \frac{(d-2)^2}{4} \left( \frac{2\pi}{\sqrt{d-2}} |\Sph^{d-1}| \right)^{2/d} = \frac{(d-2)^2}{4} \left( \mathrm{Vol}_g(M) \right)^{2/d}.
$$
Moreover, equality is attained if and only if $u$ is a constant. Here $\mathcal E_g$ is as in \eqref{eq:defe} and we note that $R_g = (d-1)(d-2)$. Our stability result reads as follows.

\begin{theorem}\label{main2}
	Let $d\geq 3$ and let $M = \Sph^1(\tfrac{1}{\sqrt{d-2}}) \times \Sph^{d-1}(1)$ with its standard product metric. Then there is a constant $c_d>0$ such that for all $u\in H^1(M)$,
	$$
	\mathcal E_g[u] - Y \left( \int_M |u|^{2d/(d-2)}\,dv_g \right)^{(d-2)/d}
	\geq c_d\, \frac{\left( \mathcal E_g[u - (\mathrm{Vol}_g(M))^{-1} \int_M u\, dv_g] \right)^2}{\mathcal E_g[u]} \,.
	$$
\end{theorem}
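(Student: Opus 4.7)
The plan is to run an iterated Bianchi--Egnell argument, paralleling the proofs of Theorems~\ref{main} and~\ref{main3}, with one novel ingredient: a concentration-compactness step accommodating the critical Sobolev exponent $p := 2d/(d-2)$. I would argue by contradiction and homogeneity: suppose there is a sequence $(u_n)\subset H^1(M)$ with $\mathcal E_g[u_n] = 1$ and
\[
\mathcal E_g[u_n] - Y \Bigl(\int_M |u_n|^{p}\,dv_g\Bigr)^{(d-2)/d} \leq \frac{1}{n}\,\mathcal E_g[u_n - \bar u_n]^2,
\]
where $\bar u_n := (\mathrm{Vol}_g(M))^{-1}\int_M u_n\,dv_g$. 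Since the right-hand side is $O(1/n)$, $(u_n)$ is a Yamabe-minimizing sequence on $M$. The strict Yamabe inequality $Y < S_d$ (valid because $M$ is not conformally diffeomorphic to $\Sph^d$), combined with a Struwe-type bubble decomposition, forbids concentration, so a subsequence converges strongly in $H^1(M)$ to a constant $c_\infty\neq 0$, which after a sign flip may be taken positive.

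Writing $c_n := \bar u_n \to c_\infty$ and $u_n = c_n(1 + \epsilon_n w_n)$ with $\int_M w_n\,dv_g = 0$, $\mathcal E_g[w_n] = 1$, $\epsilon_n \to 0$, a fourth-order Taylor expansion of the $L^p$-norm yields
\[
\mathcal E_g[u_n] - Y\|u_n\|_p^2 = c_n^2\bigl(\epsilon_n^2 Q_2[w_n] + \epsilon_n^3 Q_3[w_n] + \epsilon_n^4 Q_4[w_n] + o(\epsilon_n^4)\bigr),
\]
with $Q_2[w] = \int_M |\nabla w|^2\,dv_g - (d-2)\int_M w^2\,dv_g$, $Q_3[w] = -\tfrac{(p-1)(d-2)}{3}\int_M w^3\,dv_g$, and $Q_4[w]$ combining $\int_M w^4\,dv_g$ with $(\int_M w^2\,dv_g)^2/\mathrm{Vol}_g(M)$; the hypothesis right-hand side is $\sim c_n^2\epsilon_n^4/n$. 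The Laplacian spectrum on $M$ is $\{(d-2)k^2 + \ell(\ell+d-2) : k,\ell\in\mathbb Z_{\geq 0}\}$, so $Q_2$ vanishes exactly on the two-dimensional zero-mode space
\[
V := \mathrm{span}\bigl\{\cos(\sqrt{d-2}\,\theta),\,\sin(\sqrt{d-2}\,\theta)\bigr\},
\]
with a spectral gap on $V^\perp$. Decomposing $w_n = \alpha_n\phi_n + s_n$ with $\phi_n\in V$, $\mathcal E_g[\phi_n]=1$, and $s_n\perp V$, a first use of the hypothesis gives $\mathcal E_g[s_n] = O(\epsilon_n)$, whence $w_n\to\phi_\infty\in V$ and $|\alpha_n|\to 1$.

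The crux is the cubic term: although $\int_M \phi_n^3\,dv_g = 0$ by Fourier orthogonality on $\Sph^1$, $P_{V^\perp}(\phi_n^2)$ lies in the $(k{=}2,\ell{=}0)$ eigenspace of $-\Delta_g$ and is generically nonzero, so the cross term $\int_M \phi_n^2 s_n\,dv_g$ contributes at naive order $\epsilon_n^{7/2}$, dominating the target $\epsilon_n^4$. Following the iterated Bianchi--Egnell idea I complete the square in $s_n$:
\[
\epsilon_n^2 Q_2[s_n] - \alpha_n^2(p{-}1)(d{-}2)\,\epsilon_n^3 \int_M \phi_n^2 s_n\,dv_g = \epsilon_n^2 Q_2[s_n - s_n^*] - \tfrac{(p{-}1)^2(d{-}2)^2}{4}\alpha_n^4\epsilon_n^4\bigl\langle\phi_n^2, L^{-1}\phi_n^2\bigr\rangle_{V^\perp},
\]
where $L := -\Delta_g - (d{-}2)$ on $V^\perp\cap\{\bar s = 0\}$ and $s_n^*$ is the corresponding optimizer. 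Since $P_{V^\perp}(\phi_n^2)$ sits in the $(2,0)$-eigenspace on which $L$ acts as multiplication by $3(d{-}2)$, and $\int_M\phi^4\,dv_g = \tfrac32(\int_M\phi^2\,dv_g)^2/\mathrm{Vol}_g(M)$ for $\phi\in V$, one computes the effective quartic form
\[
\widetilde Q_4[\phi] := Q_4[\phi] - \tfrac{(p-1)^2(d-2)^2}{4}\bigl\langle\phi^2, L^{-1}\phi^2\bigr\rangle_{V^\perp} = \frac{(d-2)(p-1)(p+2)}{12\,\mathrm{Vol}_g(M)}\Bigl(\int_M\phi^2\,dv_g\Bigr)^{\!2} > 0,
\]
which is the secondary nondegeneracy condition. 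A bootstrap refining $\mathcal E_g[s_n - s_n^*] = O(\epsilon_n^2)$, together with Young's inequality to absorb the residual $O(\epsilon_n^3\|s_n\|^2)$ and $O(\epsilon_n^5)$ errors, then yields $\mathcal E_g[u_n] - Y\|u_n\|_p^2 \geq c\,\epsilon_n^4(1+o(1))$, contradicting the assumption. I expect the main obstacles to be twofold: the critical-exponent compactness step (relying on $Y < S_d$) and the verification $\widetilde Q_4 > 0$, which hinges on the arithmetic coincidence placing the first $\Sph^1$-harmonic at eigenvalue $d-2$ while the second sits at $4(d-2)$, making the resolvent action on $\phi^2 - \overline{\phi^2}$ computable in closed form.
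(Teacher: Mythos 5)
Your overall strategy --- compactness via the strict inequality $Y<S_d$, extraction of the two-dimensional zero mode $V=\mathrm{span}\{\cos(\sqrt{d-2}\,s),\sin(\sqrt{d-2}\,s)\}$ of $L=-\Delta_g-(d-2)$, completion of the square against the cubic cross-term, and verification that the effective quartic form is strictly positive --- is exactly the iterated Bianchi--Egnell argument in the paper, and your spectral identifications (in particular that $P_{V^\perp}(\phi^2)$ sits in the $(k,\ell)=(2,0)$ eigenspace where $L$ acts as multiplication by $3(d-2)$) are correct. However, there is a genuine gap: you take for granted a \emph{fourth-order Taylor expansion} of $\|u_n\|_p^p=\|c_n(1+\epsilon_n w_n)\|_p^p$. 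For $d=3,4$ one has $p=2d/(d-2)\geq 4$ and the expansion is a finite polynomial identity up to the quartic term, so this is fine. But for $d\geq 5$ one has $p<4$, and the Taylor expansion of $|1+\tau|^p$ to order $\tau^4$ is only valid for $|\tau|$ bounded away from $1$, which requires $\epsilon_n\|w_n\|_\infty$ to be small. Since $H^1(M)$ does not embed into $L^\infty$ for $d\geq 3$, this is not automatic, and your proposal never addresses it. The error from the region where $\epsilon_n|w_n|\gtrsim 1$ need not be $o(\epsilon_n^4)$ without further input.

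The paper circumvents precisely this difficulty (Steps 1--2 of the proof of Lemma~\ref{seconddecompenergymfd}) by splitting the remainder $R_n = S_n + T_n$, where $S_n$ is the $\Sph^{d-1}$-average of $R_n$, hence a function of $s$ alone. Because $H^1$ of a one-dimensional circle \emph{does} embed into $L^\infty$, $\|S_n\|\to 0$ forces $\|S_n\|_\infty\to 0$, so the quartic expansion is legitimate for $\tilde u_n = \lambda_n(1+\mu_n(g+S_n))$. The orthogonal piece $T_n$ is disposed of by a \emph{second-order} expansion only, which is always valid, together with the observation that $T_n$ is orthogonal to the kernel and negative subspace of $L$ (it has zero spherical mean, so $\int_M \phi(s)\,T_n\,dv_g=0$ for any $\phi$), giving a spectral-gap bound $\|T_n\|^2 - (q-1)Y(\mathrm{Vol}_g M)^{-1+2/q}\int T_n^2 \gtrsim \|T_n\|^2$ that absorbs the cubic error in $T_n$. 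After this reduction the paper does not redo the quartic computation on $M$ at all; it notes that $\tilde u_n$ depends only on $s\in\Sph^1(1/\sqrt{d-2})$ and rescales to $\R/\Z$, invoking the one-dimensional Lemma~\ref{seconddecompenergy} verbatim. Your direct computation of $\widetilde Q_4$ on $M$ is an acceptable alternative to that scaling step (and your arithmetic for it looks right), but the $S_n/T_n$ splitting and spectral-gap estimate are the missing ingredient that makes the quartic expansion legal when $d\geq 5$.
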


\emph{Remarks.} The same remarks (a), (b) and (c) on Theorem \ref{main} are relevant here. In particular, since $R_g$ is a positive constant, $\mathcal E_g[u]$ is equivalent to $\|u\|_{H^1}^2$ and the infimum of $\mathcal E_g[u-c]$ over all $c\in\R$ is attained for $u= (\mathrm{Vol}_g(M))^{-1} \int_M u\, dv_g$. Optimality is proved in Remark \ref{optimality2}.

\medskip

The remainder of this paper consists of three sections, devoted to the proofs of Theorems \ref{main}, \ref{main2} and \ref{main3}, respectively. We will provide all the details in the first case and focus on the additional difficulties in the second and third case.


\subsection{Acknowledgements}

The author wishes to thank R.~Neumayer for several discussions on the topic of this paper and her seminar talk in January 2021 at Caltech which motivated this work. J.~Dolbeault's help with references is much appreciated. Partial support through US National Science Foundation grants DMS-1363432 and DMS-1954995 and through German Research Foundation grant EXC-2111- 390814868 is acknowledged.


\section{Proof of Theorem \ref{main}}

In this section we prove degenerate stability for the family of one-dimensional Sobolev inequalities. The basic idea of the proof will be an iterated Bianchi--Egnell strategy. We will work throughout in the real Hilbert space $H^1(\R/\Z)$ with the inner product derived from the norm
\begin{equation}
	\label{eq:norm}
	\| u \| := \left( \int_0^1 \left( (u')^2 + S u^2\right)dt \right)^{1/2}.
\end{equation}
This norm depends through $S$ on the fixed parameter $2<q<\infty$. We abbreviate
$$
\overline u := \int_0^1 u\,dt
$$
and we denote the $L^q$-norm on $\R/\Z$ by $\|u\|_q$.

Inequality \eqref{eq:sob1d} appears in an equivalent form involving an ultraspherical operator in the work of Bakry and \'Emery \cite[pp.~204--205]{BaEm}. Earlier, \cite[Appendix B]{GiSp} (see also \cite[Corollary 6.2]{BVVe}) considered the Euler--Lagrange equation of the higher dimensional analogue of \eqref{eq:sob1d}. Their argument, which works and, in fact, simplifies in the one-dimensional context, shows that equality holds only for constants; see also \cite{DoEsKoLo}. Inequality \eqref{eq:sob1d} also appears in \cite[Theorem 4]{Be}, where it is deduced from \cite{L}, and an inspection of its proof again shows that equality holds only for constants.

\begin{lemma}\label{firstdecomp}
	Let $(u_n)\subset H^1(\R/\Z)$ be a sequence with $\|u_n\|^2 = S$ and $\|u_n\|_q \to 1$. Then, along a subsequence,
	$$
	u_n = \lambda_n \left( 1 + r_n \right)
	$$
	where $\lambda_n\in\R$, $r_n\in H^1(\R/\Z)$ and, for a $\sigma\in\{+1,-1\}$,
	\begin{equation}
		\label{eq:decomp1prop}
		\lambda_n\to \sigma \,,
		\qquad
		\| r_n \|\to 0 \,,
		\qquad
		\int_0^1 r_n\,dt = 0 \,.
	\end{equation}
\end{lemma}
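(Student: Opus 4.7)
The plan is to argue by standard concentration-compactness / lower semicontinuity in the Hilbert space $H^1(\R/\Z)$, and then read off the decomposition by splitting off the mean. Since the dimension is one, the Sobolev embedding $H^1(\R/\Z)\hookrightarrow L^q(\R/\Z)$ is compact for every finite $q$, which is what makes the compactness step work cleanly.

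First I would use that $(u_n)$ is bounded in $H^1(\R/\Z)$ (as $\|u_n\|^2=S$) to extract a subsequence, still denoted $(u_n)$, such that $u_n\rightharpoonup u$ weakly in $H^1$ and strongly in $L^q$. Passing to the limit in the normalization gives $\|u\|_q=1$, while weak lower semicontinuity yields $\|u\|^2\le S$. Combined with the sharp Sobolev inequality \eqref{eq:sob1d}, i.e.\ $\|u\|^2\ge S\|u\|_q^2=S$, this forces $\|u\|^2=S$ and shows that $u$ attains equality in \eqref{eq:sob1d}. By the characterization of cases of equality cited just above the lemma, $u$ must be a constant, and then $S=\|u\|^2=Su^2$ forces $u\equiv\sigma$ for some $\sigma\in\{+1,-1\}$. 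Since $\|u_n\|\to\|u\|$ together with weak convergence in the Hilbert space $H^1$ implies norm convergence, we actually have $u_n\to\sigma$ strongly in $H^1(\R/\Z)$.

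Next I would make the decomposition explicit by setting
$$
\lambda_n := \overline{u_n} = \int_0^1 u_n\,dt, \qquad r_n := \frac{u_n-\lambda_n}{\lambda_n},
$$
which is well-defined once $n$ is large (since $\lambda_n\to\sigma\neq0$), and which manifestly satisfies $u_n=\lambda_n(1+r_n)$ and $\int_0^1 r_n\,dt=0$. Strong convergence $u_n\to\sigma$ in $H^1$ immediately gives $\lambda_n\to\sigma$. For the remainder, I would use the orthogonality
$$
\|u_n\|^2 = \int_0^1 (u_n')^2\,dt + S\int_0^1 u_n^2\,dt = \|u_n-\lambda_n\|^2 + S\lambda_n^2,
$$
which holds because $\int_0^1(u_n-\lambda_n)\,dt=0$. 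Since $\|u_n\|^2=S$ and $\lambda_n^2\to1$, this yields $\|u_n-\lambda_n\|^2=S(1-\lambda_n^2)\to0$, hence $\|r_n\|=|\lambda_n|^{-1}\|u_n-\lambda_n\|\to0$.

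I do not expect any serious obstacle here: the only non-trivial input is the rigidity statement for \eqref{eq:sob1d} (equality iff $u$ is constant), which has been recalled just before the lemma. The one subtle point worth highlighting in writing is that one must distinguish the two possible signs $\sigma=\pm1$ (and pass to a further subsequence accordingly), since the normalization $\|u_n\|_q\to1$ is invariant under $u_n\mapsto-u_n$ and both limits $\pm1$ are compatible with the hypotheses.
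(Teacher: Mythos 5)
Your proof is correct and follows essentially the same route as the paper: weak compactness plus strong $L^q$ convergence to identify the limit as a constant optimizer $\sigma$, upgrade to strong $H^1$ convergence via norm convergence, then split off the mean to define $\lambda_n$ and $r_n$. The only cosmetic difference is that the paper obtains $L^q$-strong convergence via $C^{1/2}$-boundedness and Arzelà--Ascoli rather than directly citing the compact embedding $H^1\hookrightarrow L^q$, and your explicit orthogonality computation $\|u_n-\lambda_n\|^2=S(1-\lambda_n^2)$ makes the last step a bit more concrete.
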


\begin{proof}
	Since $(u_n)$ is bounded in $H^1(\R/\Z)$, it is bounded in $C^{1/2}(\R/\Z)$ and therefore equicontinuous. Thus, after passing to a subsequence, $(u_n)$ converges weakly in $H^1$ and uniformly to a function $u\in H^1(\R/\Z)$. By lower semicontinuity, we have $\|u\|^2\leq S$ and, by uniform convergence, $\|u\|_q = \lim_{n\to\infty} \|u_n\|_q = 1$. Thus, necessarily, $\|u\|^2= S$ and $u_n$ converges strongly in $H^1(\R/\Z)$ to $u$. Moreover, $u$ is a minimizer in the Sobolev inequality and therefore, by the above discussion, $u$ is constant. Since $\|u\|_q=1$, we have $u=\sigma$ for a $\sigma\in\{+1,-1\}$. We now set
	$$
	\lambda_n := \overline{u_n} \,,
	\qquad
	r_n := \frac{u_n}{\overline{u_n}} - 1 \,.
	$$
	By the above mentioned convergence properties, $\lambda_n\to \sigma$ and $r_n\to 0$ in $H^1$.
\end{proof}

In what follows an important role is played by the function
$$
g(t) := \cos(2\pi t)
$$
and its translates. The reason for this is that $g$ is a zero mode of the Hessian of the minimization problem.

\begin{lemma}\label{seconddecomp}
	Let $(u_n)\subset H^1(\R/\Z)$ be a sequence with $\|u_n\|^2 = S$ and
	\begin{equation}
		\label{eq:seconddecompass}
		\frac{\|u_n\|^2 - S \|u_n\|_q^2}{\| u_n -\overline{u_n}\|^2} \to 0 \,.
	\end{equation}
	Then, along a subsequence,
	$$
	u_n = \lambda_n \left( 1 + \mu_n \left(g(\cdot - t_n) + R_n \right) \right)
	$$
	where $\lambda_n,\mu_n\in\R$, $t_n\in\R/\Z$, $R_n\in H^1(\R/\Z)$ and, for a $\sigma\in\{+1,-1\}$,
	\begin{equation*}
		\lambda_n\to \sigma \,,
		\qquad
		\mu_n \to 0 \,,
		\qquad
		\| R_n \|\to 0
	\end{equation*}
	and
	\begin{equation*}
		\int_0^1 R_n\,dt = \int_0^1 R_n \cos2\pi(t-t_n)\,dt = \int_0^1 R_n \sin 2\pi(t-t_n)\,dt = 0 \,.
	\end{equation*}
\end{lemma}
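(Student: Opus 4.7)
\emph{Proof proposal.} The plan is to refine the decomposition from Lemma \ref{firstdecomp} by splitting $r_n$ into its projection onto the kernel of the Hessian of the Sobolev deficit at a constant and a higher-frequency remainder, then showing the hypothesis \eqref{eq:seconddecompass} forces the latter to vanish faster than $\|r_n\|$ itself.

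First I would apply Lemma \ref{firstdecomp} to extract a subsequence with $u_n = \lambda_n(1+r_n)$, $\lambda_n \to \sigma \in \{\pm 1\}$, $\int r_n\,dt = 0$ and $\|r_n\|\to 0$. The constraint $\|u_n\|^2 = S$ combined with $\int r_n=0$ gives $\lambda_n^2 = S/(S+\|r_n\|^2)$. Since $r_n\to 0$ in $L^\infty$ by Sobolev embedding, a Taylor expansion yields
$$
\|1+r_n\|_q^q = 1 + \tfrac{q(q-1)}{2}\int_0^1 r_n^2\,dt + o(\|r_n\|^2),
$$
and hence $\|1+r_n\|_q^2 = 1 + (q-1)\int_0^1 r_n^2\,dt + o(\|r_n\|^2)$. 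Substituting and using $S(q-2)=(2\pi)^2$ gives, after cancellation,
$$
\|u_n\|^2 - S\|u_n\|_q^2 = \int_0^1 (r_n')^2\,dt - (2\pi)^2\int_0^1 r_n^2\,dt + o(\|r_n\|^2).
$$
Since $\|u_n-\overline{u_n}\|^2 = \lambda_n^2\|r_n\|^2 \sim \|r_n\|^2$, the hypothesis \eqref{eq:seconddecompass} converts into
$$
\int_0^1 (r_n')^2\,dt - (2\pi)^2\int_0^1 r_n^2\,dt = o(\|r_n\|^2).
$$

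Next, I would analyze this bound via Fourier series on $\R/\Z$. Writing $r_n = \sum_{k\neq 0}\hat r_n(k)e^{2\pi ikt}$,
$$
\int_0^1 (r_n')^2\,dt - (2\pi)^2\int_0^1 r_n^2\,dt = (2\pi)^2\sum_{k\neq 0}(k^2-1)|\hat r_n(k)|^2,
$$
which vanishes exactly on the $k=\pm 1$ modes (the kernel of the Hessian) and is bounded below by $3(2\pi)^2\sum_{|k|\geq 2}|\hat r_n(k)|^2$, a quantity comparable to $\|r_n^{\perp}\|^2$, where $r_n^{\perp}$ is the part of $r_n$ supported on $|k|\geq 2$. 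Let $p_n$ denote the $L^2$-projection of $r_n$ onto $\mathrm{span}\{\cos 2\pi t,\sin 2\pi t\}$; this can be written uniquely as $p_n=\mu_n g(\cdot - t_n)$ with $\mu_n\geq 0$ and $t_n\in\R/\Z$ (and arbitrary $t_n$ if $\mu_n=0$). Set $\mu_n R_n := r_n - p_n$, and define $R_n:=(r_n-p_n)/\mu_n$ when $\mu_n>0$, $R_n:=0$ otherwise. Then $r_n = \mu_n(g(\cdot - t_n) + R_n)$ and, by construction, $R_n$ has Fourier support in $\{|k|\geq 2\}$, which is equivalent to the three stated orthogonality relations (since the span of $1,\cos 2\pi(t-t_n),\sin 2\pi(t-t_n)$ equals the span of the $k=0,\pm 1$ modes).

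Finally I would verify the two convergences. From $\|r_n\|^2 = \mu_n^2(\|g\|^2 + \|R_n\|^2) \geq \mu_n^2\|g\|^2$ and $\|r_n\|\to 0$ we obtain $\mu_n\to 0$. For $\|R_n\|\to 0$, observe that on Fourier modes $|k|\geq 2$ we have $\int (R_n')^2 - (2\pi)^2\int R_n^2 \geq c\|R_n\|^2$ for an absolute $c>0$, and the projection decomposition gives
$$
\int_0^1 (r_n')^2\,dt - (2\pi)^2\int_0^1 r_n^2\,dt = \mu_n^2\!\left(\int_0^1 (R_n')^2\,dt - (2\pi)^2\int_0^1 R_n^2\,dt\right) \geq c\mu_n^2\|R_n\|^2,
$$
since the $\mu_n g(\cdot-t_n)$ contribution is annihilated by the quadratic form. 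Combined with the bound $o(\|r_n\|^2) = o(\mu_n^2(\|g\|^2+\|R_n\|^2))$, this forces $\|R_n\|^2/(\|g\|^2+\|R_n\|^2)\to 0$, hence $\|R_n\|\to 0$. The only delicate point is the degenerate case $\mu_n=0$ with $r_n\neq 0$: then $r_n$ lies entirely in the $|k|\geq 2$ subspace, so the quadratic form exceeds $c\|r_n\|^2$, contradicting the hypothesis for large $n$; after discarding finitely many indices we may assume $\mu_n>0$ whenever $r_n\neq 0$, and the case $r_n=0$ is trivial. I do not anticipate any real obstacle here; the essential content is simply recognizing that $-\partial_t^2-(2\pi)^2$ is the constrained Hessian and that its kernel on the mean-zero subspace consists exactly of the translates of $g$.
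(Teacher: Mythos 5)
Your proposal is correct and follows essentially the same route as the paper's proof: apply Lemma \ref{firstdecomp}, expand the deficit to quadratic order to obtain $\int (r_n')^2 - S(q-2)\int r_n^2 = o(\|r_n\|^2)$, identify the kernel of this quadratic form on the mean-zero subspace as $\mathrm{span}\{\cos 2\pi t,\sin 2\pi t\}$, project $r_n$ onto it to define $\mu_n g(\cdot-t_n)$, and use the spectral gap on the modes $|k|\geq 2$ to conclude $\|R_n\|\to 0$. The paper writes this with explicit coefficients $\alpha_n,\beta_n$ rather than in the language of $L^2$-projections and also does not spell out the degenerate case $\mu_n=0$, but the content is the same.
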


\begin{proof}
	Since $\| u_n -\overline{u_n}\|^2= \inf_{c\in\R} \| u_n -c\|^2\leq \|u_n\|^2 = S$, assumption \eqref{eq:seconddecompass} implies that $\|u_n\|_q \to 1$. Therefore the previous lemma is applicable and, along a subsequence, we can decompose $u_n = \lambda_n (1+r_n)$ as described there. 
	
	We now expand the terms in the Sobolev inequality to `quadratic order'. We use the fact that, uniformly for $\tau\in\R$,
	$$
	|1+ \tau|^q = 1+ q \tau + \frac12 q (q-1) \tau^2 + \mathcal O(|\tau|^{\min\{3,q\}} + |\tau|^q)\,.
	$$
	Thus,
	$$
	|u_n|^q = |\lambda_n|^q \left( 1 + q r_n + \frac12 q (q-1) r_n^2 + \mathcal O(|r_n|^{\min\{3,q\}} + |r_n|^q) \right)
	$$
	and
	$$
	\|u_n\|_q^q = |\lambda_n|^q \left( 1 + \frac12 q (q-1) \int_0^1 r_n^2\,dt + \mathcal O( \|r_n\|_q^{\min\{3,q\}}) \right).
	$$
	(Here we used the fact that $r_n$ has mean value zero and that $\|r_n\|_q\to 0$.) Thus,
	$$
	\|u_n\|_q^{2} = \lambda_n^{2} \left( 1 + (q-1) \int_0^1 r_n^2\,dt + \mathcal O( \|r_n\|_q^{\min\{3,q\}}) \right).
	$$
	On the other hand, again by the mean value zero property,
	$$
	\|u_n\|^2  = \lambda_n^2 \left( S + \|r_n\|^2 \right).
	$$
	Putting this together, we obtain
	\begin{equation}
		\label{eq:expansionquad}
		\|u_n\|^2 - S\|u_n\|_q^2 =  \lambda_n^2 \left( \int_0^1 \left( (r_n')^2 - S(q-2)r_n^2 \right)dt + \mathcal O( \|r_n\|_q^{\min\{3,q\}}) \right).
	\end{equation}
	
	Since $\| u_n -\overline{u_n}\|^2 = \lambda_n^2 \|r_n\|^2$, the expansion \eqref{eq:expansionquad} shows that assumption \eqref{eq:seconddecompass} is equivalent to
	$$
	\frac{\int_0^1 \left( (r_n')^2 - S(q-2)r_n^2 \right)dt}{\|r_n\|^2} \to 0 \,.
	$$
	The kernel of the quadratic form in the numerator is spanned by $g(t) = \cos 2\pi t$ and $\sin 2\pi t$. On the orthogonal complement of this kernel and the negative direction corresponding to constants the quadratic form is equivalent to $\|\cdot\|^2$. Thus, if we define
	$$
	\alpha_n := 2 \int_0^1 r_n \cos 2\pi t \,dt \,,
	\qquad
	\beta_n := 2 \int_0^1 r_n \sin 2\pi t\,dt \,,
	$$
	and $s_n$ by
	$$
	r_n = \alpha_n \cos 2\pi t + \beta_n \sin 2\pi t + s_n \,,
	$$
	then
	$$
	\int_0^1 s_n \,dt = \int_0^1 s_n \cos2\pi t\,dt = \int_0^1 s_n \sin 2\pi t\,dt = 0
	$$
	and
	$$
	\| r_n\|^2 = \frac12 \left( (2\pi)^2 + S \right) \left( \alpha_n^2 + \beta_n^2 \right) + \|s_n\|^2
	\qquad\text{and}\qquad
	\frac{\|s_n\|^2}{\|r_n\|^2} \to 0 \,.
	$$
	We set
	$$
	\mu_n := \sqrt{\alpha_n^2 + \beta_n^2} \,,
	\qquad
	R_n := \frac{s_n}{\sqrt{\alpha_n^2 + \beta_n^2}} \,.
	$$
	The fact that $\|r_n\|\to 0$ implies $\mu_n\to 0$ and the fact that $\|s_n\|/\|r_n\|\to 0$ implies $\|R_n\|\to 0$. Finally, we choose $t_n\in\R/\Z$ such that
	$$
	\frac{\alpha_n}{\sqrt{\alpha_n^2+ \beta_n^2}} \cos 2\pi t + \frac{\beta_n}{\sqrt{\alpha_n^2+ \beta_n^2}} \sin 2\pi t = \cos 2\pi(t-t_n) = g(t-t_n)
	$$
	and obtain the claimed decomposition.
\end{proof}

\begin{lemma}\label{seconddecompenergy}
	Let $(u_n)\subset H^1(\R/\Z)$ be a sequence with $\|u_n\|^2 = S$ and $\|u_n\|_q \to 1$. Then
	\begin{equation}
		\label{eq:seconddecompenergy}
		\liminf_{n\to\infty} \frac{\| u_n\|^2 \left( \|u_n\|^2 - S\|u_n\|_q^2\right)}{\| u_n -\overline{u_n}\|^4} \geq \frac{(q+2)(q-2)}{12\,(q-1)} \,.
	\end{equation}
\end{lemma}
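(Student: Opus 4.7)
The plan is to combine the decomposition from Lemma~\ref{seconddecomp} with a sharper fourth-order expansion of the Sobolev deficit, and then to absorb an apparently bothersome cubic cross-term via completion of squares in the second-harmonic Fourier direction.

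If the liminf in \eqref{eq:seconddecompenergy} is $+\infty$ there is nothing to prove, so I pass to a subsequence along which the ratio is bounded by some $M$. Since $\|u_n\|^2 = S$ is fixed and $\|u_n-\overline{u_n}\|\to 0$ by Lemma~\ref{firstdecomp}, boundedness of the ratio gives $\|u_n\|^2 - S\|u_n\|_q^2 \leq (M/S)\|u_n-\overline{u_n}\|^4$, and hence $(\|u_n\|^2 - S\|u_n\|_q^2)/\|u_n-\overline{u_n}\|^2 \to 0$. This is exactly the hypothesis of Lemma~\ref{seconddecomp}, so I may write $u_n = \lambda_n(1 + \mu_n(g_n + R_n))$ with $g_n(t) := \cos 2\pi(t-t_n)$, $\lambda_n\to\pm1$, $\mu_n\to 0$, $\|R_n\|\to 0$, and $R_n$ $L^2$-orthogonal to $\{1, g_n, \sin 2\pi(\cdot-t_n)\}$. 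Note $\|g\|^2 = S(q-1)/2$, so $\|u_n-\overline{u_n}\|^4 = \lambda_n^4\mu_n^4 (S(q-1)/2)^2(1+o(1))$.

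Setting $r_n := \mu_n(g_n+R_n)$, I Taylor-expand $(1+r_n)^q$ to fourth order (valid since $r_n\to 0$ uniformly) and then the $(\,\cdot\,)^{2/q}$ power. Using the orthogonality of $R_n$, the mean-zero property of $g$, and the identity $(2\pi)^2 = S(q-2)$ (so $g$ is a zero mode of $-\partial_t^2 - S(q-2)$), I arrive at
\begin{align*}
\mathcal{D}_n/\lambda_n^2 = \mu_n^2 Q_n - \frac{S(q-1)(q-2)}{3}\int_0^1 r_n^3\,dt + K\mu_n^4 + o(\mu_n^4),
\end{align*}
where $\mathcal{D}_n := \|u_n\|^2 - S\|u_n\|_q^2$, $Q_n := \int_0^1 ((R_n')^2 - S(q-2)R_n^2)\,dt \geq 0$, and $K = S(q-1)(q-2)(q+1)/32$ collects the fourth-order contributions from $(1+r_n)^q$ and the quadratic-in-$X$ correction to $(1+X)^{2/q}$, using $\int g^4 = 3/8$ and $(\int g^2)^2 = 1/4$.

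The main obstacle is the cubic term. Since $\int g_n^3 = 0$ and $\int R_n = 0$, expanding yields $\int r_n^3 = 3\mu_n^3\beta_n/4 + O(\mu_n^3\|R_n\|^3)$, where $\beta_n$ is the $\cos 4\pi(\cdot-t_n)$ Fourier coefficient of $R_n$ (using $g_n^2 = \tfrac{1}{2}(1 + \cos 4\pi(\cdot-t_n))$). A priori this is only of order $\mu_n^3\|R_n\|$, not $\mu_n^4$. The key observation is that $Q_n$ already controls $\beta_n^2$: writing $R_n = \beta_n\cos 4\pi(\cdot-t_n) + \tilde R_n$ with $\tilde R_n$ orthogonal to the second harmonic and using $(4\pi)^2 - S(q-2) = 3S(q-2)$, one gets $Q_n = \tfrac{3S(q-2)}{2}\beta_n^2 + \tilde Q_n$ with $\tilde Q_n \geq c\|\tilde R_n\|^2 \geq 0$. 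Completing the square in $\beta_n$,
\begin{align*}
\mu_n^2 Q_n - \frac{S(q-1)(q-2)\mu_n^3\beta_n}{4} \geq \mu_n^2\tilde Q_n - \frac{S(q-1)^2(q-2)\mu_n^4}{96}.
\end{align*}
Adding $K\mu_n^4$ and using $(q+1)/32 - (q-1)/96 = (q+2)/48$, one obtains $\mathcal{D}_n/\lambda_n^2 \geq S(q-1)(q-2)(q+2)\mu_n^4/48 + o(\mu_n^4)$; the $O(\mu_n^3\|R_n\|^3)$ remainder is genuinely $o(\mu_n^4)$ after bootstrapping $\|R_n\| = O(\mu_n)$ from the a priori bound $\mathcal{D}_n = O(\mu_n^4)$, combined with $Q_n \geq c\|R_n\|^2$ and Young's inequality. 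Dividing by $\|u_n-\overline{u_n}\|^4/\|u_n\|^2$ (which equals $\lambda_n^4\mu_n^4 S(q-1)^2/4$ up to $(1+o(1))$) gives the claimed liminf $(q+2)(q-2)/(12(q-1))$.
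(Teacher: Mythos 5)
Your argument is correct and follows the paper's proof essentially line for line: apply Lemma~\ref{seconddecomp}, expand the deficit to fourth order, isolate the second-harmonic Fourier direction, and complete the square to obtain the constant $(q+2)(q-2)/(12(q-1))$. The only bookkeeping difference is that you first bootstrap $\|R_n\|=O(\mu_n)$ from the a priori bound on the ratio and then treat remainders as $o(\mu_n^4)$, whereas the paper absorbs a $C|\mu_n|\|R_n\|^2$ term into the coercive quadratic form before completing the square; both work. One small slip: the error in expanding $\int_0^1 r_n^3\,dt = 3\mu_n^3\beta_n/4 + \cdots$ is $O(\mu_n^3\|R_n\|^2)$, coming from $\int g R_n^2$, not $O(\mu_n^3\|R_n\|^3)$, though this does not affect the conclusion since your bootstrap controls it anyway.
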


The key point of this lemma is that the right side of \eqref{eq:seconddecompenergy} is strictly positive. While the precise value of the constant is not important for the proof of Theorem \ref{main}, we will show in Remark \ref{optimality} that it is best possible.

\begin{proof}
	\emph{Step 1.} We pass to a subsequence along which the liminf in \eqref{eq:seconddecompenergy} is realized. By Lemma \ref{firstdecomp} and its proof, $\|u_n - \overline{u_n} \| \to 0$. Therefore, if $\liminf_{n\to\infty} \left( \|u_n\|^2- S \|u_n\|_q^2 \right)/$ $\| u_n - \overline{u_n} \|^2 >0$, then the left side of \eqref{eq:seconddecompenergy} is equal to $+\infty$. Thus, in the following we assume that $\liminf_{n\to\infty} \left( \|u_n\|^2- S \|u_n\|_q^2 \right)/ \| u_n -\overline{u_n}\|^2 =0$.
	
	By Lemma \ref{seconddecomp}, after passing to a subsequence, we can write
	$$
	u_n = \lambda_n \left( 1 + \mu_n \left( g(\cdot - t_n) + R_n \right) \right),
	$$
	where $\lambda_n$, $\mu_n$, $t_n$ and $R_n$ are as in that lemma. By translation invariance, we may also assume that $t_n=0$.
	
	\medskip
	
	\emph{Step 2.} We now expand the terms in the Sobolev inequality to `quartic order'. We use the fact that for all $\tau\in[1/2,3/2]$, say,
	$$
	| 1 + \tau|^q = 1 + q \tau + \frac12 q(q-1)\tau^2 + \frac16 q(q-1)(q-2) \tau^3 + \frac1{24} q(q-1)(q-2)(q-3) \tau^4 + \mathcal O(\tau^5) \,.
	$$
	Since $\mu_n(g+R_n)$ tends to zero in $H^1(\R/\Z)$ and therefore in $L^\infty$, for all sufficiently large $n$, we have $|\mu_n(g+R_n)|\leq 1/2$ and therefore the above bound is applicable. Recalling the orthogonality conditions, we obtain
	\begin{align*}
		\| u_n\|_q^q & = |\lambda_n|^q \left( 1 + \frac12 q(q-1) \mu_n^2 \left(\|g\|_2^2 + \|R_n\|_2^2\right) + \frac12 q(q-1)(q-2) \mu_n^3 \int_0^1 g^2 R_n\,dt \right. \\
		& \qquad\qquad \left. + \frac1{24} q(q-1)(q-2)(q-3) \mu_n^4 \|g\|_4^4 + \mathcal O( |\mu_n|^3 \| R_n\|^2 + |\mu_n|^5) \right).
	\end{align*}
	Here we estimated, using the Schwarz inequality,
	$$
	\left| \mu_n^4 \int_0^1 g^3 R_n\,dt \right| = \mathcal O( |\mu_n|^3 \| R_n\|^2 + |\mu_n|^5) \,.
	$$
	Consequently,
	\begin{align*}
		\| u_n\|_q^{2} & = \lambda_n^{2} \left( 1 + (q-1) \mu_n^2 \left(\|g\|_2^2 + \|R_n\|_2^2\right) + (q-1)(q-2) \mu_n^3 \int_0^1 g^2 R_n\,dt \right. \\
		& \qquad\qquad + \frac1{12} (q-1)(q-2)(q-3) \mu_n^4 \|g\|_4^4 - \frac14(q-2)(q-1)^2 \mu_n^4 \|g\|_2^4	\\
		& \qquad\qquad \left. +\ \mathcal O( |\mu_n|^3 \| R_n\|^2 + |\mu_n|^5) \right).
	\end{align*}
	On the other hand, because of the orthogonality conditions,
	$$
	\| u_n \|^2 = \lambda_n^2 \left( S + \mu_n^2 \|g\|^2 + \mu_n^2 \|R_n\|^2 \right).
	$$
	Putting this together, we obtain
	\begin{align*}
		& \lambda_n^{-2} \left( \|u_n\|^2 - S\|u_n\|_q^2 \right) = \mu_n^2 \left( \|g\|^2 - S (q-1) \|g\|_2^2 \right) \\
		& \quad + \mu_n^2 \left( \|R_n\|^2 - S (q-1) \|R_n\|_2^2
		- S (q-1)(q-2) \mu_n \int_0^1 g^2 R_n\,dt \right) \\
		& \quad + \mu_n^4 \left( \frac14 S (q-2)(q-1)^2 \|g\|_2^4 - \frac1{12} S (q-1)(q-2)(q-3) \|g\|_4^4 \right) \\
		& \quad + \mathcal O( |\mu_n|^3 \| R_n\|^2 + |\mu_n|^5) \,.
	\end{align*}
	Using
	$$
	\int_0^1 g^2\,dt = \frac12 \,,
	\qquad
	\int_0^1 g^4 \,dt = \frac38 \,,
	$$
	we can simplify this expansion to
	\begin{align*}
		\lambda_n^{-2} \left( \|u_n\|^2 - S\|u_n\|_q^2 \right) & = \mu_n^2 \left( \! \|R_n\|^2 \!- S (q-1) \|R_n\|_2^2
		- \! S (q-1)(q-2) \mu_n \int_0^1 \! g^2 R_n\,dt \right) \\
		& \quad + \mu_n^4\, \frac{(q+1)(q-1)(q-2) }{32}\, S +\ \mathcal O( |\mu_n|^3 \| R_n\|^2 + |\mu_n|^5) \,.
	\end{align*}
	
	\medskip
	
	\emph{Step 3.} It remains to get a lower bound on the term that is quadratic plus linear in $R_n$. We expand $R_n$ into a Fourier series,
	$$
	R_n(t) = \sum_{k=2}^\infty \left( a_k \cos 2\pi k t + b_k \sin 2\pi k t \right). 
	$$
	(For notational simplicity, we do not reflect the dependence of the $a_k$ and $b_k$ on $n$.) Note that by the orthogonality conditions there are no terms involving $a_0$, $a_1$ or $b_1$.
	We have
	$$
	\int_0^1 (R_n')^2\,dt = \frac12 \sum_{k=2}^\infty (2\pi k)^2 \left( a_k^2 + b_k^2 \right)
	\qquad \text{and} \qquad
	\int_0^1 g^2 R_n \,dt = \frac14 a_2 \,.
	$$
	Therefore,
	\begin{align*}
		& \|R_n\|^2 - S (q-1) \|R_n\|_2^2 - S (q-1)(q-2) \mu_n \int_0^1 g^2 R_n\,dt - C |\mu_n| \|R_n\|^2 \\
		& = \frac12 \sum_{k=2}^\infty \left( (2\pi k)^2 - S(q-2) \right) \left( a_k^2 + b_k^2 \right) - S(q-1)(q-2) \frac14 \mu_n a_2  \\
		& \quad - \frac C2 |\mu_n| \sum_{k=2}^\infty \left( (2\pi k)^2 + S \right) \left( a_k^2 + b_k^2 \right) \\
		& = \frac{q-2}2 S \left( \sum_{k=2}^\infty \left( k^2 - 1 \right) \left( a_k^2 + b_k^2 \right) - \frac{q-1}2 \mu_n a_2 \right. \\
		& \qquad\qquad\quad \left. - C |\mu_n| \sum_{k=2}^\infty \left( k^2 + \frac1{q-2} \right) \left( a_k^2 + b_k^2 \right) \right) \\
		& = \frac{q-2}2 S \left( \left( \left( 3 - C |\mu_n|\left( 4 + \frac1{q-2} \right) \right) a_2^2 - \frac{q-1}2 \mu_n a_2 \right) \right. \\
		& \qquad\qquad\quad + \left(3 - C |\mu_n|\left( 4 + \frac1{q-2} \right) \right) b_2^2 \\
		& \qquad\qquad\quad \left. + \sum_{k=3}^\infty \left( \left( k^2 - 1 \right) - C|\mu_n| \left( k^2 + \frac1{q-2}\right) \right) \left( a_k^2 + b_k^2 \right) \right).
	\end{align*}
	Since $\mu_n\to 0$, we have for $n$ large enough, uniformly in $k\geq 2$,
	$$
	\left( k^2 - 1 \right) - C|\mu_n| \left( k^2 + \frac1{q-2}\right) > 0 \,.
	$$
	Under this assumption and abbreviating
	$$
	\rho_n := 3 - C |\mu_n|\left( 4 + \frac1{q-2} \right)>0 \,,
	$$
	we can bound
	\begin{align*}
		& \|R_n\|^2 - S (q-1) \|R_n\|_2^2 - S (q-1)(q-2) \mu_n \int_0^1 g^2 R_n\,dt - C |\mu_n| \|R_n\|^2 \\
		& \geq \frac{q-2}2 S \left( \rho_n a_2^2 - \frac{q-1}2 \mu_n a_2 \right)  
		= \frac{q-2}2 S \rho_n \left( \left( a_2 - \frac{q-1}{4\,\rho_n} \mu_n \right)^2 - \frac{(q-1)^2}{16 \,\rho_n^2} \mu_n^2 \right)	\\
		& \geq - \frac{(q-1)^2(q-2)}{32 \,\rho_n} S \mu_n^2 = - \frac{(q-1)^2(q-2)}{96} S \mu_n^2 + \mathcal O(|\mu_n|^3) \,.
	\end{align*}
	
	To summarize, we have shown that
	\begin{align*}
		\lambda_n^{-2} \left( \|u_n\|^2 - S\|u_n\|_q^2 \right)
		& \geq S \mu_n^4 \left( \frac{(q+1)(q-1)(q-2)}{32} - \frac{(q-1)^2(q-2)}{96} \right) + \mathcal O(|\mu_n|^5) \\
		& = S \mu_n^4\, \frac{(q+2)(q-1)(q-2)}{48} + \mathcal O(|\mu_n|^5) \,.
	\end{align*}

	On the other hand, we have, by the orthogonality conditions,
	\begin{align*}
		\mu_n^4 & = \frac{\|u_n - \overline{u_n} \|^4}{\lambda_n^4 (\|g\|^2 + \|R_n\|^2)^2}  
		= \frac{4}{(q-1)^2 S^2}\, \|u_n - \overline{u_n}\|^4  \left( 1+ o(1) \right).
	\end{align*}
	Inserting this into the previous bound, we get the claimed asymptotic inequality.
\end{proof}

\begin{remark}\label{optimality}
	The bound in Lemma \ref{seconddecompenergy} is best possible, both with respect to the power four and with respect to the constant on the right side. Indeed, it is saturated as $\epsilon\to 0$ for $u_\epsilon = 1 + \epsilon g + \epsilon^2 h$ with $h(t) := ((q-1)/12)\cos 4\pi t$. In the notation of the previous proof, this corresponds to $\mu_\epsilon=\epsilon$ and $R_\epsilon = \epsilon (h+o(1))$. The function $h$ is chosen in such a way that the square that is completed in the previous proof (Step 3) vanishes to leading order.	
\end{remark}

We are finally in position to prove our first main result.

\begin{proof}[Proof of Theorem \ref{main}]
	We argue by contradiction and assume that for some fixed $2<q<\infty$, no such $c_q>0$ exists. Then there is a sequence $(u_n)\subset H^1(\R/\Z)$ such that
	\begin{equation}
		\label{eq:contradictionass}
		\frac{\| u_n\|^2 \left( \|u_n\|^2 - S\|u_n\|_q^2\right)}{\| u_n - \overline{u_n} \|^4} \to 0 \,.
	\end{equation}
	By homogeneity we may assume that $\|u_n\|^2 = S$, which implies $\|u_n\|_q \leq 1$.
	
	Using $\|u_n-\overline{u_n}\|^2 = \inf_c \|u_n-c\|^2 \leq \|u_n\|^2=S$ we obtain
	$$
	\liminf_{n\to\infty} \frac{\| u_n\|^2 \left( \|u_n\|^2 - S\|u_n\|_q^2\right)}{\| u_n - \overline{u_n} \|^4} \geq \liminf_{n\to\infty} \left( 1 - \| u_n \|_q^2 \right).
	$$
	Combining this with \eqref{eq:contradictionass}, we deduce that $\|u_n\|_q \to 1$. Therefore, Lemma \ref{seconddecompenergy} is applicable and yields \eqref{eq:seconddecompenergy}, which contradicts \eqref{eq:contradictionass}.	
\end{proof}

Let us briefly review the previous proof and emphasize its main aspects. Lemma \ref{firstdecomp} is a standard ingredient in a Bianchi--Egnell-type proof. It decomposes a sequences as an optimizer plus a small remainder. Lemma \ref{seconddecomp} is an iteration of this, where now the remainder $r_n$ is decomposed as a main term, namely a zero mode, plus a secondary remainder $R_n$. The proof follows again the Bianchi--Egnell strategy of expanding to second order, but the crucial difference now is that the linear operator that appears has a kernel that is not due to symmetries of the set of optimizers. In Lemma \ref{seconddecompenergy} we expand the `energy' to fourth order. The key step is the completion of the square, which determines the leading order of the remainder $R_n$ in terms of the zero mode. This is the function $h$ in Remark \ref{optimality}. The problem-specific aspect of this proof is that to order $\mu_n^2$, the `energy gain' by introducing $R_n$, namely, $S(q-1)^2(q-2)/96$ is strictly smaller than the `energy loss' due to presence of $g$, namely, $S(q+1)(q-1)(q-2)/32$. We think of this as a \emph{secondary nondegeneracy condition}. By the validity of the Sobolev inequality, we know that the gain is not larger than the loss. Since it is strictly smaller, we obtain a stability inequality with a quartic remainder. If the secondary nondegeneracy condition would not be satisfied and we had equality, we could try to iterate again and to expand further. From this point of view the \L ojasiewicz inequality in the work \cite{EnNeSp} says that this procedure stops after finitely many iterations.


\section{Proof of Theorem \ref{main2}}

For $d\geq 3$ we consider the manifold
$$
M = \Sph^1(\tfrac{1}{\sqrt{d-2}}) \times \Sph^{d-1}(1)
$$
with its standard metric. Since $R_g = (d-1)(d-2)$, we have
$$
\| u \|^2 := \mathcal E_g[u] = \int_0^{2\pi/\sqrt{d-2}} \int_{\Sph^{d-1}} \left( \left| \frac{\partial u}{\partial s} \right|^2 + |\nabla_{\Sph^{d-1}} u|^2 + \frac{(d-2)^2}{4} u^2 \right) d\omega\,ds  \,.
$$
We will abbreviate $q=2d/(d-2)$ and denote the $L^q(M,dv_g)$-norm by $\|u\|_q$.

We use intentionally the same symbols $\|\cdot\|$ and $\|\cdot\|_q$ as in the previous section. We hope that this rather underlines the common features of the proofs than creates confusion.

\begin{lemma}\label{firstdecompmfd}
	Let $(u_n)\subset H^1(M)$ be a sequence with $\|u_n\|^2 = Y$ and $\|u_n\|_q \to 1$. Then, along a subsequence,
	$$
	u_n = \lambda_n \left( 1 + r_n \right)
	$$
	where $\lambda_n\in\R$, $r_n\in H^1(M)$ and, for a $\sigma\in\{+1,-1\}$,
	\begin{equation*}
		\lambda_n\to \sigma\, (\mathrm{Vol}_g(M))^{-1/q} \,,
		\qquad
		\| r_n \|\to 0 \,,
		\qquad
		\int_M r_n\,dv_g = 0 \,.
	\end{equation*}
\end{lemma}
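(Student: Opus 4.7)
The plan is to mimic the proof of Lemma \ref{firstdecomp}, but the essential new difficulty is that $q = 2d/(d-2)$ is now the critical Sobolev exponent, so the embedding $H^1(M) \hookrightarrow L^q(M)$ is not compact and a weak $H^1$ limit of $(u_n)$ could in principle fail to capture the full $L^q$ mass. First, since $\|u_n\|^2 = Y$ and $\mathcal E_g$ is equivalent to the square of the $H^1$-norm (because $R_g = (d-1)(d-2)$ is a positive constant), the sequence is bounded in $H^1(M)$. Passing to a subsequence I obtain $u_n \rightharpoonup u$ weakly in $H^1(M)$, strongly in $L^p(M)$ for every $p < q$, and pointwise almost everywhere.

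Next, the Br\'ezis--Lieb lemma yields
\begin{equation*}
\|u_n\|_q^q = \|u\|_q^q + \|u_n - u\|_q^q + o(1),
\end{equation*}
while weak convergence $u_n - u \rightharpoonup 0$ in the Hilbert space $(H^1(M), \|\cdot\|)$ gives
\begin{equation*}
\|u_n\|^2 = \|u\|^2 + \|u_n - u\|^2 + o(1).
\end{equation*}
Set $a := \|u\|_q^q \in [0,1]$. Applying the Sobolev inequality $\|\cdot\|^2 \geq Y \|\cdot\|_q^2$ to both $u$ and $u_n - u$ I obtain
\begin{equation*}
Y \;=\; \lim \|u_n\|^2 \;\geq\; \|u\|^2 + \liminf \|u_n - u\|^2 \;\geq\; Y \bigl( a^{2/q} + (1-a)^{2/q} \bigr).
\end{equation*}
Since $2/q < 1$, the map $t \mapsto t^{2/q}$ is strictly subadditive on $[0,1]$, so this forces $a \in \{0, 1\}$.

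The main obstacle is to rule out the bubbling alternative $a = 0$. Here I invoke the classical Aubin--Schoen fact (implicit in \cite{Sc}) that the Yamabe constant of $M$ is \emph{strictly} less than that of the round sphere, $Y < Y(\Sph^d)$, precisely because $M$ is not conformally equivalent to $\Sph^d$. If $(u_n)$ concentrated at a point $p \in M$, a standard blow-up around $p$ (using conformal normal coordinates and a suitable rescaling) would produce in the limit an extremal for the sharp Sobolev inequality on $\R^d$, whose energy-to-$L^q$-norm-squared ratio equals $Y(\Sph^d)$. Comparing with $\|u_n\|^2 / \|u_n\|_q^2 \to Y$ would give $Y \geq Y(\Sph^d)$, a contradiction. (Alternatively, Struwe's profile decomposition does the same job.)

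Therefore $a = 1$, so $u$ saturates the Sobolev inequality with $\|u\|_q = 1$; by the equality case recalled just before the lemma, $u$ is constant, and the normalization $\|u\|_q = 1$ forces $u \equiv \sigma (\mathrm{Vol}_g(M))^{-1/q}$ for some $\sigma \in \{+1, -1\}$. The energy splitting then yields $\|u_n - u\| \to 0$, that is, strong $H^1$-convergence. Setting $\lambda_n := (\mathrm{Vol}_g(M))^{-1} \int_M u_n \, dv_g$ and $r_n := u_n/\lambda_n - 1$ gives $\int_M r_n \, dv_g = 0$, $\lambda_n \to \sigma (\mathrm{Vol}_g(M))^{-1/q}$, and $\|r_n\| \to 0$, concluding the proof exactly as in Lemma \ref{firstdecomp}.
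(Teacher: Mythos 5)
Your compactness argument is essentially sound and carries out by hand (Br\'ezis--Lieb, Hilbert-space splitting, strict subadditivity of $t\mapsto t^{2/q}$, blow-up rule-out) what the paper handles by citing Lions's concentration-compactness theorem \cite[Theorem 4.1]{Li}; both routes are valid, and yours is more self-contained on that point. However, you invoke the Aubin--Schoen theorem to obtain the strict inequality $Y < Y(\Sph^d)$, which is a much deeper result than the situation requires. The paper observes that simply plugging the constant test function into the Yamabe quotient already gives an upper bound that is strictly below $S_d$; this elementary computation is really what is available and what the argument should use, since Aubin--Schoen for general non-conformally-spherical manifolds (and in particular for locally conformally flat ones such as $\Sph^1\times\Sph^{d-1}$) ultimately rests on the positive mass theorem.

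The more significant gap is your treatment of the rigidity step. You conclude that the weak limit $u$ is a minimizer and then write ``by the equality case recalled just before the lemma, $u$ is constant.'' But the paper's text before the lemma explicitly says ``see Lemma \ref{firstdecompmfd} below'' for precisely this fact: the proof of the present lemma is where the rigidity is supposed to be substantiated. Citing it back is circular. The actual content is nontrivial and specific to this $M$: by the strong maximum principle a nonnegative nontrivial minimizer is positive; by the moving-plane result of Caffarelli--Gidas--Spruck \cite{CaGiSp} any positive solution of the Euler--Lagrange equation depends only on the $\Sph^1$-variable $s$; and an ODE analysis (this is where the radius $1/\sqrt{d-2}$ enters) then shows that the only positive solutions of the resulting one-dimensional problem are constants. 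Your proof needs at least a sketch of this chain; without it the key assertion that minimizers are constant is unjustified.
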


This lemma can essentially be considered as known. Let us show how it can be deduced from results in the literature.

\begin{proof}
	We get an upper bound on the Yamabe constant by taking a constant trial function. The resulting upper bound is strictly small than the Sobolev constant on the sphere or equivalently on $\R^d$, namely $S_d$ in \eqref{eq:bianchiegnell}. Consequently, Lions's theorem \cite[Theorem 4.1]{Li} is applicable and yields relative compactness in $H^1(M)$ of minimizing sequences. (Note the typo of the statement in \cite[Theorem 4.1]{Li}; the relative compactness requires a strict `binding' inequality.) In particular, there is a minimizer. By general arguments, any minimizer is either nonnegative or nonpositive. Without loss of generality, we can restrict ourselves to nonnegative minimizers.
	
	To complete the proof of the lemma, we need to show that the only minimizers are constants. We consider the Euler--Lagrange equation satisfied by a minimizer and follow Schoen \cite{Sc}. By the maximum principle any nonnegative, nontrivial solution of the Euler--Lagrange equation is positive. Then, as shown in \cite{CaGiSp} using the moving plane method, any positive solution depends only on the variable $s$. Now an ODE analysis shows that the only positive solutions are constants. It is at this last step that the value $1/\sqrt{d-2}$ of the radius of the sphere enters.
\end{proof}

Compared to the previous section, we slightly change the definition of $g$. Now it denotes the function, depending only on the coordinate $s$ in the first factor of $M$,
$$
g(s) := \cos(\sqrt{d-2}\, s) \,.
$$

\begin{lemma}\label{seconddecompmfd}
	Let $(u_n)\subset H^1(M)$ be a sequence with $\|u_n\|^2 = Y$ and
	\begin{equation*}
		\frac{\|u_n\|^2 - Y \|u_n\|_q^2}{\| u_n -\overline{u_n}\|^2} \to 0 \,.
	\end{equation*}
	Then, along a subsequence,
	$$
	u_n = \lambda_n \left( 1 + \mu_n \left(g(\cdot - s_n) + R_n \right) \right)
	$$
	where $\lambda_n,\mu_n\in\R$, $s_n\in\R/( \tfrac{2\pi}{\sqrt{d-2}}\Z)$, $R_n\in H^1(M)$ and, for a $\sigma\in\{+1,-1\}$,
	\begin{equation*}
		\lambda_n\to \sigma\, (\mathrm{Vol}_g(M))^{-1/q} \,,
		\qquad
		\mu_n \to 0 \,,
		\qquad
		\| R_n \|\to 0
	\end{equation*}
	and
	\begin{equation*}
		\int_M R_n\,dv_g = \int_M R_n \cos\sqrt{d-2}(s-s_n)\,dv_g = \int_M R_n \sin \sqrt{d-2}(s-s_n)\,dv_g = 0 \,.
	\end{equation*}
\end{lemma}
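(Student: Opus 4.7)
The plan is to follow exactly the template of Lemma \ref{seconddecomp}, with the one-dimensional Fourier basis $\{1,\cos 2\pi k t,\sin 2\pi k t\}$ replaced by eigenfunctions of $-\Delta_g$ on the product manifold $M$. First, since $\|u_n-\overline{u_n}\|^2 = \inf_{c\in\R}\|u_n-c\|^2 \leq \|u_n\|^2 = Y$, the hypothesis forces $\|u_n\|^2 - Y\|u_n\|_q^2 \to 0$, and combined with the Sobolev inequality $\|u_n\|_q^2 \leq 1$ this yields $\|u_n\|_q \to 1$. Lemma \ref{firstdecompmfd} then applies along a subsequence and produces a decomposition $u_n = \lambda_n(1+r_n)$ with $\lambda_n\to\sigma\,(\mathrm{Vol}_g(M))^{-1/q}$, $\|r_n\|\to 0$, and $\int_M r_n\,dv_g = 0$.

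Second, I would expand $\|u_n\|^2 - Y\|u_n\|_q^2$ to quadratic order in $r_n$. Using $|1+\tau|^q = 1+q\tau+\tfrac{1}{2}q(q-1)\tau^2+\mathcal{O}(|\tau|^{\min\{3,q\}}+|\tau|^q)$, the mean-zero property of $r_n$, and the identities $Y = \tfrac{(d-2)^2}{4}\mathrm{Vol}_g(M)^{2/d}$, $\tfrac{2}{d}+\tfrac{2}{q}=1$, and $\tfrac{(d-2)^2}{4}(q-2)=d-2$, a direct calculation yields
\begin{equation*}
\|u_n\|^2 - Y\|u_n\|_q^2 = \lambda_n^2 \Bigl[\int_M \bigl(|\nabla r_n|^2 - (d-2)r_n^2\bigr) dv_g + \mathcal{O}(\|r_n\|^{\min\{3,q\}})\Bigr].
\end{equation*}
Since $\overline{u_n} = \lambda_n$ gives $\|u_n-\overline{u_n}\|^2 = \lambda_n^2\|r_n\|^2$, the hypothesis translates into $\int_M(|\nabla r_n|^2 - (d-2)r_n^2)\,dv_g \,/\, \|r_n\|^2 \to 0$.

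Third, I would identify the kernel of the quadratic form $Q[r] := \int_M(|\nabla r|^2 - (d-2)r^2)\,dv_g$ on the subspace $\{\int_M r\,dv_g = 0\}$ by separation of variables on $M=\Sph^1(1/\sqrt{d-2})\times\Sph^{d-1}$. The eigenvalues of $-\Delta_g$ are $k^2(d-2)+\ell(\ell+d-2)$ for $k,\ell\in\Z_{\geq 0}$, with eigenfunctions of the form (trig in $s$)$\times$(spherical harmonic in $\omega$). Requiring this value to equal $d-2$ with $(k,\ell)\neq(0,0)$ forces $k=1,\ell=0$; all other lattice points yield strictly larger eigenvalues. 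Hence the kernel of $Q$ on mean-zero functions is the two-dimensional space spanned by $\cos(\sqrt{d-2}\,s)$ and $\sin(\sqrt{d-2}\,s)$, and on its $L^2$-orthogonal complement inside mean-zero functions $Q$ is equivalent to $\|\cdot\|^2$. Writing $r_n = \alpha_n\cos(\sqrt{d-2}\,s)+\beta_n\sin(\sqrt{d-2}\,s)+s_n^\perp$ with $s_n^\perp$ in this complement, the equivalence plus the previous convergence forces $\|s_n^\perp\|/\|r_n\|\to 0$. Setting $\mu_n:=\sqrt{\alpha_n^2+\beta_n^2}$, choosing $s_n\in\R/(\tfrac{2\pi}{\sqrt{d-2}}\Z)$ so that the kernel part equals $\mu_n\, g(s-s_n)$, and defining $R_n := s_n^\perp/\mu_n$ produces the claimed decomposition; the orthogonality conditions on $R_n$ follow since $s_n^\perp$ is orthogonal to both $\cos(\sqrt{d-2}\,s)$ and $\sin(\sqrt{d-2}\,s)$ and hence to every translate thereof.

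The main obstacle — in fact the only step that differs substantively from Lemma \ref{seconddecomp} — is the spectral computation identifying the kernel. The crucial point, and the reason the product manifold $M$ is a counterexample to quadratic stability, is that the chosen radius $1/\sqrt{d-2}$ of the $\Sph^1$ factor synchronizes the fundamental circle frequency $k=1$ with the $\ell=0$ constant sector on $\Sph^{d-1}$ to produce a zero mode of the linearized Yamabe operator; any other radius would leave the kernel empty and standard Bianchi--Egnell stability would hold.
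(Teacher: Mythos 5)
Your proposal is correct and takes essentially the same approach as the paper, which simply says the argument is ``essentially the same as that of Lemma~\ref{seconddecomp},'' writes out the relevant quadratic form $\int_M(|\nabla_g r|_g^2-(d-2)r^2)\,dv_g$, and asserts that its kernel is spanned by $\cos(\sqrt{d-2}\,s)$ and $\sin(\sqrt{d-2}\,s)$. You have simply filled in the detail the paper leaves implicit, namely the product spectral computation $k^2(d-2)+\ell(\ell+d-2)=d-2$ forcing $(k,\ell)=(1,0)$, which is exactly the step where the radius $1/\sqrt{d-2}$ enters.
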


\begin{proof}
	The proof of this lemma is essentially the same as that of Lemma \ref{seconddecomp}. The relevant quadratic form is now
	\begin{align*}
		& \int_M \left( |\nabla_g r|_g^2 + \frac{(d-2)^2}{4} r^2\right)dv_g - (q-1) Y \left( \mathrm{Vol}_g(M) \right)^{-1+2/q} \int_M r^2\,dv_g \\
		& = \int_M \left( |\nabla_g r|_g^2 - (d-2) r^2\right)dv_g \,.
	\end{align*}
	Its kernel is spanned by $g(s) = \cos(\sqrt{d-2}\,s)$ and $\sin(\sqrt{d-2}\,s)$. Therefore we can argue as before.	
\end{proof}

\begin{lemma}\label{seconddecompenergymfd}
	Let $(u_n)\subset H^1(M)$ be a sequence with $\|u_n\|^2 = Y$ and $\|u_n\|_q \to 1$. Then
	\begin{equation*}
		\liminf_{n\to\infty} \frac{\| u_n\|^2 \left( \|u_n\|^2 - Y\|u_n\|_q^2\right)}{\| u_n -\overline{u_n}\|^4} \geq \frac{(q+2)(q-2)}{12\,(q-1)} \,.
	\end{equation*}
\end{lemma}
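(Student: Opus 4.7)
The strategy mirrors that of Lemma \ref{seconddecompenergy} step by step, the only substantive new ingredient being the spectral analysis on the product manifold $M$. After passing to a subsequence realizing the liminf, Lemma \ref{firstdecompmfd} gives $\|u_n - \overline{u_n}\| \to 0$, and since $\|u_n\|^2 = Y$ is fixed, a positive liminf of $(\|u_n\|^2 - Y\|u_n\|_q^2)/\|u_n - \overline{u_n}\|^2$ already forces the left side of the claim to be $+\infty$. Otherwise that ratio tends to zero, Lemma \ref{seconddecompmfd} applies, and the rotation invariance in the $\Sph^1$-factor lets us take $s_n = 0$, writing $u_n = \lambda_n(1 + \mu_n(g + R_n))$ with $R_n$ orthogonal in $L^2(M)$ to $1$, $g$, and $\sin(\sqrt{d-2}\,s)$.

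I then Taylor-expand $|1+\mu_n(g+R_n)|^q$ to fourth order in $\mu_n$ and integrate. The orthogonality conditions, together with the elementary identities $\int_M g^2\,dv_g = \tfrac{1}{2}\mathrm{Vol}_g(M)$, $\int_M g^3\,dv_g = 0$, $\int_M g^4\,dv_g = \tfrac{3}{8}\mathrm{Vol}_g(M)$, $\|g\|^2 = \tfrac{(d-2)(d+2)}{8}\mathrm{Vol}_g(M)$, and $(q-1)Y\mathrm{Vol}_g(M)^{-1+2/q} = \tfrac{(d+2)(d-2)}{4}$, reduce the expansion to the exact analogue of Step~2 of Lemma \ref{seconddecompenergy}:
\begin{align*}
\|u_n\|^2 - Y\|u_n\|_q^2 &= \lambda_n^2\mu_n^2\left[\int_M\bigl(|\nabla R_n|^2 - (d-2)R_n^2\bigr)dv_g - (d+2)\mu_n\int_M g^2 R_n\,dv_g\right] \\
 &\quad + \lambda_n^2\mu_n^4\,\tfrac{(d+2)(3d-2)}{32(d-2)}\mathrm{Vol}_g(M) + O\bigl(\mu_n^3\|R_n\|^2 + \mu_n^5\bigr).
\end{align*}
(In dimensions where $q<4$ the Taylor remainder is controlled via $\bigl||1+\tau|^q - P_4(\tau)\bigr|\le C(|\tau|^5\wedge|\tau|^q)$ together with Sobolev embedding.)

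The heart of the proof is the spectral analysis of $-\Delta_g - (d-2)$. In the joint basis $\{\cos(k\sqrt{d-2}\,s),\sin(k\sqrt{d-2}\,s)\}\otimes Y^\ell_m(\omega)$ of $L^2(M)$, this operator acts by the eigenvalue $k^2(d-2)+\ell(\ell+d-2)-(d-2)$; after removing the constant mode and the two-dimensional kernel at $(k,\ell)=(1,0)$, every remaining eigenvalue is strictly positive, with minimum~$1$ attained at $(k,\ell)=(0,1)$. Since $g^2=\tfrac12\bigl(1+\cos(2\sqrt{d-2}\,s)\bigr)$, the cubic coupling $\int_M g^2 R_n\,dv_g$ picks out only the amplitude $A$ of the $(k,\ell)=(2,0)$ mode of $R_n$, whose eigenvalue is $3(d-2)$. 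Completing the square in $A$ (the other modes contribute non-negatively) bounds the bracket from below by $-\tfrac{(d+2)^2\mathrm{Vol}_g(M)}{96(d-2)}\mu_n^2$, so that $\|u_n\|^2-Y\|u_n\|_q^2 \ge \lambda_n^2\mu_n^4\tfrac{(d+2)(d-1)}{12(d-2)}\mathrm{Vol}_g(M) + O(\mu_n^5)$. Substituting $\mu_n^4 = 64\|u_n-\overline{u_n}\|^4/\bigl(\lambda_n^4(d-2)^2(d+2)^2\mathrm{Vol}_g(M)^2\bigr)(1+o(1))$ from $\|u_n-\overline{u_n}\|^2=\lambda_n^2\mu_n^2\|g\|^2(1+o(1))$ and using $\|u_n\|^2=Y$ with $Y/\|1\|^2=\mathrm{Vol}_g(M)^{-2/q}$ simplifies the universal constant to $\tfrac{4(d-1)}{3(d-2)(d+2)}=\tfrac{(q+2)(q-2)}{12(q-1)}$, exactly as claimed.

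The main obstacle is the spectral analysis above---verifying the exact kernel of $-\Delta_g - (d-2)$ on the orthogonal complement of constants, and observing that the cubic coupling $\int_M g^2 R_n\,dv_g$ hits a single mode of strictly positive eigenvalue $3(d-2)$. The secondary nondegeneracy condition, that the quartic gain strictly dominates the loss from completing the square, then boils down to $3(3d-2)>d+2$, trivially satisfied for all $d\ge 1$.
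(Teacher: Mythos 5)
Your computation of the constants, the spectral analysis of $-\Delta_g - (d-2)$ on $M$ (including the exact kernel and the observation that the cubic coupling $\int_M g^2 R_n\,dv_g$ hits only the $(k,\ell)=(2,0)$ mode of eigenvalue $3(d-2)$), and the completion of the square are all correct, and for $d=3,4$ (where $q\geq 4$ and the fourth-order Taylor expansion of $|1+\tau|^q$ has a bounded remainder) your direct expansion on $M$ gives a valid proof, slightly more hands-on than the paper's reduction to the one-dimensional Lemma \ref{seconddecompenergy} by scaling but equivalent in substance.

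However, for $d>4$ (so $q=2d/(d-2)<4$) there is a genuine gap. You claim the pointwise bound $\bigl||1+\tau|^q - P_4(\tau)\bigr|\le C(|\tau|^5\wedge|\tau|^q)$; this is false when $q<4$, because for $|\tau|\gtrsim 1$ the polynomial $P_4$ has a $\tau^4$ term, so $\bigl||1+\tau|^q - P_4(\tau)\bigr|\sim|\tau|^4\gg|\tau|^q$, and the remainder is only $\mathcal O(|\tau|^4)$. Even with the corrected bound, the argument does not close: the resulting error involves $\int_M|\mu_n(g+R_n)|^4\,dv_g$ on the set where $|\mu_n(g+R_n)|$ is not small, and since $H^1(M)$ embeds into $L^{q}$ but not into $L^4$ for $d>4$, this integral cannot be controlled by $\|R_n\|^2$. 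The paper circumvents exactly this difficulty by decomposing $R_n=S_n+T_n$, where $S_n$ is the average of $R_n$ over $\Sph^{d-1}$. The $T_n$ part is eliminated at the level of a second-order (hence unproblematic) expansion via a spectral gap estimate, because $T_n$ is orthogonal to the kernel and the negative direction of $-\Delta_g-(d-2)$. What remains, $\tilde u_n=\lambda_n(1+\mu_n(g+S_n))$, depends only on the $\Sph^1$ variable $s$, so $S_n\to 0$ in $L^\infty$ and the fourth-order expansion applies; the problem then reduces by scaling to Lemma \ref{seconddecompenergy}. Without some device to obtain pointwise control on the leading part of $R_n$, the quartic expansion you rely on is not justified in dimensions $d>4$.
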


\begin{proof}
	\emph{Step 1.} The proof for $d=3,4$ follows exactly the lines of that of Lemma \ref{seconddecompenergy}. Indeed, in these dimensions one has $q=2d/(d-2)\geq 4$ and therefore one can expand $|u_n|^q$ to fourth order even without using the $L^\infty$ convergence in Lemma \ref{seconddecompenergy}. For $d>4$, however, one has $q=2d/(d-2)<4$ and therefore the quartic expansion of $|u_n|^q$ is problematic. To overcome this issue, we first decompose $u_n$ as in Lemma \ref{seconddecompmfd} and then we further decompose
	$$
	R_n = S_n + T_n
	\qquad\text{with}\qquad
	S_n(s) := |\Sph^{d-1}|^{-1} \int_{\Sph^{d-1}} R_n(s,\omega)\,d\omega \,.
	$$
	The function $T_n$ has the property that for any function $\phi$ of $s$ alone,
	\begin{equation}
		\label{eq:meanzerot}
		\int_M \phi(s) T_n\,dv_g = 0 \,.
	\end{equation}
	By orthogonality,
	$$
	\| R_n \|^2 = \|S_n\|^2 + \|T_n\|^2 \,,
	$$
	so $\|R_n\|\to 0$ implies $\|S_n\|\to 0$ and consequently $S_n\to 0$ in $L^\infty$. This will allow us to argue for $S_n$ like we did in the proof of Lemma \ref{seconddecompenergy}. But first we need to get rid of the term $T_n$, and we do this by a spectral gap estimate. 
	
	\medskip
	
	\emph{Step 2.} Let us set (assuming without loss of generality that $s_n=0$)
	$$
	u_n = \tilde u_n + \lambda_n \mu_n T_n
	\qquad\text{with}\qquad
	\tilde u_n := \lambda_n \left( 1 + \mu_n (g + S_n) \right).
	$$
	Then, by a quadratic estimate as in the proofs of Lemmas \ref{seconddecomp} and \ref{seconddecompmfd},
	$$
	\| u_n\|_q^q = \|\tilde u_n\|_q^q + \frac12 q(q-1) \lambda_n^2 \mu_n^2 \int_M |\tilde u_n|^{q-2} T_n^2\,dv_g + \mathcal O( |\lambda_n|^q |\mu_n|^{\min\{3,q\}} \|T_n\|_q^{\min\{3,q\}}) \,.
	$$
	Note that the term linear in $T_n$ cancels by \eqref{eq:meanzerot} with $\phi = 
	|\tilde u_n|^{q-2} \tilde u_n$.	We also used the fact that $\|T_n\|_q\lesssim \|T_n\| \to 0$. Consequently,
	$$
	\| u_n\|_q^2 = \|\tilde u_n\|_q^2 + (q-1) \lambda_n^2 \| \tilde u_n\|_q^{-q+2} \mu_n^2 \int_M |\tilde u_n|^{q-2} T_n^2\,dv_g + \mathcal O( \lambda_n^2 |\mu_n|^{\min\{3,q\}} \|T_n\|_q^{\min\{3,q\}}) \,.
	$$
	
	In order to simplify the term quadratic in $T_n$, we need some rough expansions of $\tilde u_n$. Using $\|g+S_n\|_q \lesssim 1$ one finds without much effort that
	$$
	|\lambda_n|^{-q} \| \tilde u_n \|_q^q = \mathrm{Vol}_g(M)  + \mathcal O(|\mu_n|)
	$$
	and
	$$
	|\lambda_n|^{-q+2} \int_M |\tilde u_n|^{q-2} T_n^2\,dv_g = \int_M T_n^2\,dv_g
	+ \mathcal O(|\mu_n|^{\min\{1,q-2\}} \|T_n\|_q^2)
	$$
	Thus,
	$$
	\| \tilde u_n\|_q^{-q+2} \int_M |\tilde u_n|^{q-2} T_n^2\,dv_g = (\mathrm{Vol}_g(M))^{-1+2/q} \int_M T_n^2\,dv_g + \mathcal O(|\mu_n|^{\min\{1,q-2\}} \|T_n\|_q^2) \,.
	$$	
	On the other hand, because of the orthogonality conditions,
	\begin{equation}
		\label{eq:decompenergyorthomfd}
		\| u_n \|^2 = \| \tilde u_n \|^2 + \lambda_n^2 \mu_n^2 \|T_n\|^2 \,.
	\end{equation}
	Putting this together, we obtain
	\begin{align*}
		& \lambda_n^{-2} \left( \|u_n\|^2 - Y \| u_n\|_q^2 \right) = \lambda_n^{-2} \left( \|\tilde u_n\|^2 - Y \|\tilde u_n\|_q^2 \right) \\
		& \quad + \mu_n^2 \left( \|T_n\|^2  - (q-1) Y (\mathrm{Vol}_g(M))^{-1+2/q} \int_M T_n^2\,dv_g + \mathcal O(|\mu_n|^{\min\{1,q-2\}} \|T_n\|_q^2) \right).
	\end{align*}
	Just like in the proof of Lemma \ref{seconddecompmfd}, the term quadratic in $T_n$ involves the operator $-\Delta_g -(d-2)$. Since, by \eqref{eq:meanzerot}, $T_n$ is orthogonal to its kernel, which is spanned by $g(s) = \cos(\sqrt{d-2}\,s)$ and $\sin(\sqrt{d-2}\,s)$, and to its negative spectral subspace, which is spanned by the constant function, we have
	$$
	\|T_n\|^2  - (q-1) Y (\mathrm{Vol}_g(M))^{-1+2/q} \int_M T_n^2\,dv_g \gtrsim \|T_n\|^2
	$$
	with an implicit constant depending only on $d$. Thus, if $n$ is large enough, the error term $\mathcal O(|\mu_n|^{\min\{1,q-2\}} \|T_n\|_q^2)$ can be absorbed and we conclude that
	$$
	\|u_n\|^2 - Y \| u_n\|_q^2 \geq \|\tilde u_n\|^2 - Y \|\tilde u_n\|_q^2 \,.
	$$
	Moreover, we note that
	$$
	\| u_n - \overline{u_n}\|^2 = \| \tilde u_n - \overline{\tilde u_n} \|^2 + \lambda_n^2 \mu_n^2 \|T_n\|^2 \,.
	$$
	Since
	$$
	\| \tilde u_n - \overline{\tilde u_n} \|^2 = \lambda_n^2 \mu_n^2 \left( \|g\|^2 + \|S_n\|^2 \right) \geq \lambda_n^2 \mu_n^2 \|g\|^2 \,,
	$$
	and $\| T_n\|^2 \to 0$, we conclude that
	$$
	\| u_n - \overline{u_n}\|^2 = \| \tilde u_n - \overline{\tilde u_n} \|^2 \left( 1 + o(1)\right).
	$$
	Finally, by \eqref{eq:decompenergyorthomfd}, $\|u_n\| \geq \|\tilde u_n\|$. To summarize, we have shown that
	$$
	\frac{\| u_n\|^2 \left( \| u_n\|^2 - Y\| u_n\|_q^2\right)}{\|  u_n - \overline{ u_n}\|^4}
	\geq \frac{\| \tilde u_n\|^2 \left( \|\tilde u_n\|^2 - Y\|\tilde u_n\|_q^2\right)}{\| \tilde u_n - \overline{\tilde u_n}\|^4}  \left( 1 + o(1)\right).
	$$
	(With more effort one can show that the $o(1)$ error on the right side is not necessary, but we will not need this.)
	
	\medskip
	
	\emph{Step 3.} From this point on, the proof is exactly the same as that of Lemma \ref{seconddecompenergy}. In fact, one does not even have redo that argument, one can simply argue by scaling. Note that $\tilde u_n$ are functions depending only on the variable $s\in\Sph^1(\tfrac{1}{\sqrt{d-2}})$. If we set $\tilde u_n(s) = v_n(s\sqrt{d-2}/(2\pi))$, then $v_n$ is one-periodic and
	$$
	\frac{\| \tilde u_n\|^2 \left( \|\tilde u_n\|^2 - Y\|\tilde u_n\|_q^2\right)}{\| \tilde u_n - \overline{\tilde u_n}\|^4} = \frac{\| v_n\|^2 \left( \|v_n\|^2 - S\|v_n\|_q^2\right)}{\| v_n - \overline{v_n}\|^4} \,,
	$$
	where on the right side $\|\cdot\|$ stands for the norm \eqref{eq:norm} of functions in $H^1(\R/\Z)$ with $S = (2\pi)^2 /(q-2)$. The claimed bound now follows from that in Lemma \ref{seconddecompenergy}.	
\end{proof}

\begin{remark}\label{optimality2}
	The bound in Lemma \ref{seconddecompenergymfd} is best possible, both with respect to the power four and with respect to the constant on the right side. This follows from Remark \ref{optimality} by the same scaling as at the end of the previous proof.	
\end{remark}

Theorem \ref{main2} follows from Lemma \ref{seconddecompenergymfd} in the same way as Theorem \ref{main} follows from Lemma \ref{seconddecompenergy}. We omit the details.


\section{Proof of Theorem \ref{main3}}

We fix $d\geq 2$ and $2<q<2d/(d-2)$ and abbreviate, in this section,
$$
\| u \|^2 := \int_{\Sph^d} \left( |\nabla u|^2 + \frac{d}{q-2} u^2\right)d\omega
$$
and
$$
Y := \frac{d}{q-2}\,|\Sph^d|^{1-2/q} \,. 
$$
Moreover, $\|u\|_q$ will denote the $L^q$-norm on $\Sph^d$ and $\overline u = |\Sph^d|^{-1} \int_{\Sph^d} u\,d\omega$.

Let us briefly comment on the history of inequality \eqref{eq:sobsubcrit}. By symmetric decreasing rearrangment, it suffices to prove the inequality for functions that depend only on $\omega_{d+1}$ and the resulting inequality was shown in the work of Bakry and \'Emery \cite[pp.~204--205]{BaEm}. As mentioned before Lemma \ref{firstdecomp}, the inequality appears explicitly in the work of Bidaut-V\'eron and V\'eron \cite[Corollary 6.2]{BVVe}, who also show that equality holds only for constant. Their work builds upon \cite[Appendix B]{GiSp}. In addition, like \eqref{eq:sob1d}, inequality \eqref{eq:sobsubcrit} appears in \cite[Theorem 4]{Be}, from which one can also deduce the cases of equality.

\begin{lemma}\label{firstdecompsphere}
	Let $(u_n)\subset H^1(\Sph^d)$ be a sequence with $\|u_n\|^2 = Y$ and $\|u_n\|_q \to 1$. Then, along a subsequence,
	$$
	u_n = \lambda_n \left( 1 + r_n \right)
	$$
	where $\lambda_n\in\R$, $r_n\in H^1(\Sph^d)$ and, for a $\sigma\in\{+1,-1\}$,
	\begin{equation*}
		\lambda_n\to \sigma\,|\Sph^d|^{-1/q} \,,
		\qquad
		\| r_n \|\to 0 \,,
		\qquad
		\int_{\Sph^d} r_n\,d\omega = 0 \,.
	\end{equation*}
\end{lemma}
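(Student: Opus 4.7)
The strategy mirrors that of Lemma \ref{firstdecomp}, but the compactness argument is even easier here because the exponent $q$ is strictly subcritical, so the Sobolev embedding $H^1(\Sph^d)\hookrightarrow L^q(\Sph^d)$ is compact by Rellich--Kondrachov. The four steps are: extract a strong $L^q$ limit by compactness, identify the norm of the limit via the Sobolev inequality, promote this to strong $H^1$-convergence, and finally read off the form $u_n=\lambda_n(1+r_n)$.

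\emph{Step 1.} Since $\|u_n\|^2=Y$ and $\|\cdot\|$ is equivalent to the standard $H^1(\Sph^d)$-norm, the sequence $(u_n)$ is bounded in $H^1(\Sph^d)$. Using $q<2d/(d-2)$, the embedding into $L^q$ is compact, so after passing to a subsequence there exists $u\in H^1(\Sph^d)$ with $u_n\rightharpoonup u$ in $H^1$ and $u_n\to u$ in $L^q$. In particular $\|u\|_q=\lim_{n\to\infty}\|u_n\|_q=1$.

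\emph{Step 2.} Weak lower semicontinuity gives $\|u\|^2\leq\liminf_{n\to\infty}\|u_n\|^2=Y$, while the Sobolev inequality \eqref{eq:sobsubcrit} applied to $u$ yields $\|u\|^2\geq Y\|u\|_q^2=Y$. Hence $\|u\|^2=Y$. Combined with the weak $H^1$ convergence and the Hilbert-space identity (weak convergence plus convergence of norms implies strong convergence), we conclude that $u_n\to u$ strongly in $H^1(\Sph^d)$.

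\emph{Step 3.} Since $u$ achieves equality in \eqref{eq:sobsubcrit} and equality holds only for constants (as recalled before the statement of the lemma), $u$ is constant. The relation $\|u\|_q=1$ then forces $|u|=|\Sph^d|^{-1/q}$, so $u=\sigma\,|\Sph^d|^{-1/q}$ for some $\sigma\in\{+1,-1\}$.

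\emph{Step 4.} Set $\lambda_n:=\overline{u_n}=|\Sph^d|^{-1}\int_{\Sph^d}u_n\,d\omega$ and $r_n:=u_n/\lambda_n-1$, which is well-defined for all large $n$ since $\lambda_n\to u\neq 0$. By the strong $H^1$ convergence $\lambda_n\to\sigma\,|\Sph^d|^{-1/q}$ and
\[
\|r_n\|=\frac{\|u_n-\lambda_n\|}{|\lambda_n|}\leq\frac{\|u_n-u\|+\|\lambda_n-u\|}{|\lambda_n|}\longrightarrow 0,
\]
where we used that the constant $\lambda_n-u$ has $\|\cdot\|$-norm a constant times $|\lambda_n-u|\to 0$. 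The identity $\int_{\Sph^d}r_n\,d\omega=|\Sph^d|(\overline{u_n}/\lambda_n-1)=0$ is immediate. This gives the claimed decomposition, and the argument presents no real obstacle beyond the routine compactness and rigidity inputs already recorded in the excerpt.
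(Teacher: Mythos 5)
Your proof is correct and follows the same route the paper takes: the paper explicitly states that Lemma \ref{firstdecompsphere} is proved ``the same as in the proof of Lemma \ref{firstdecomp}, except that one replaces the compactness theorem of Arzel\`a--Ascoli by Rellich's,'' and your four steps — compact embedding, Sobolev inequality forcing $\|u\|^2 = Y$, upgrade to strong $H^1$ convergence, rigidity to constants, and then setting $\lambda_n=\overline{u_n}$, $r_n=u_n/\lambda_n-1$ — are precisely the details the paper omits. No issues.
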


\begin{proof}
	The argument is the same as in the proof of Lemma \ref{firstdecomp}, except that one replaces the compactness theorem of Arzel\`a--Ascoli by Rellich's. We omit the details.
\end{proof}

\begin{lemma}\label{seconddecompsphere}
	Let $(u_n)\subset H^1(\Sph^d)$ be a sequence with $\|u_n\|^2 = Y$ and
	\begin{equation*}
		\frac{\|u_n\|^2 - Y \|u_n\|_q^2}{\| u_n -\overline{u_n}\|^2} \to 0 \,.
	\end{equation*}
	Then, along a subsequence,
	$$
	u_n = \lambda_n \left( 1 + \mu_n \left(e_n\cdot\omega + R_n \right) \right)
	$$
	where $\lambda_n,\mu_n\in\R$, $e_n\in\Sph^d$, $R_n\in H^1(M)$ and, for a $\sigma\in\{+1,-1\}$,
	\begin{equation*}
		\lambda_n\to \sigma\,|\Sph^d|^{-1/q} \,,
		\qquad
		\mu_n \to 0 \,,
		\qquad
		\| R_n \|\to 0
	\end{equation*}
	and, for all $j=1,\ldots,d+1$,
	\begin{equation*}
		\int_{\Sph^d} R_n\,d\omega = \int_{\Sph^d} R_n \omega_j\,d\omega = 0 \,.
	\end{equation*}
\end{lemma}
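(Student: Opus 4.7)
The approach closely parallels that of Lemma~\ref{seconddecomp}, the only change being that the two-dimensional zero-mode subspace spanned by $\cos(2\pi t)$ and $\sin(2\pi t)$ on $\R/\Z$ is replaced by the $(d+1)$-dimensional subspace of degree-one spherical harmonics on $\Sph^d$, spanned by $\omega_1,\ldots,\omega_{d+1}$. As a first step, $\|u_n-\overline{u_n}\|^2 \leq \|u_n\|^2 = Y$, so the hypothesis forces $\|u_n\|^2 - Y\|u_n\|_q^2 \to 0$ and hence $\|u_n\|_q \to 1$; Lemma~\ref{firstdecompsphere} then yields, along a subsequence, $u_n = \lambda_n(1+r_n)$ with $\lambda_n \to \sigma|\Sph^d|^{-1/q}$, $\|r_n\|\to 0$, and $\int_{\Sph^d} r_n\,d\omega = 0$.

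Next, I would expand $\|u_n\|_q^q$ to quadratic order in $r_n$ using
$$|1+\tau|^q = 1 + q\tau + \tfrac{1}{2}q(q-1)\tau^2 + O(|\tau|^{\min\{3,q\}} + |\tau|^q),$$
together with the mean-zero condition on $r_n$; the error is $o(\|r_n\|^2)$ by the Sobolev embedding $H^1(\Sph^d)\hookrightarrow L^q(\Sph^d)$ together with $\min\{3,q\}>2$. Raising to the $2/q$-th power and using the identity $Y|\Sph^d|^{-1+2/q} = d/(q-2)$, the combination $\|u_n\|^2 - Y\|u_n\|_q^2$ collapses to
$$\lambda_n^2\int_{\Sph^d}\bigl(|\nabla r_n|^2 - d\,r_n^2\bigr)\,d\omega + o(\|r_n\|^2),$$
so the hypothesis translates to $\int_{\Sph^d}(|\nabla r_n|^2 - dr_n^2)\,d\omega = o(\|r_n\|^2)$.

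The eigenvalues of $-\Delta_{\Sph^d}$ are $k(k+d-1)$, $k\geq 0$, so the operator $-\Delta_{\Sph^d} - d$ has kernel precisely the $(d+1)$-dimensional subspace spanned by $\omega_1,\ldots,\omega_{d+1}$, and on the orthogonal complement of the constants and this kernel the next eigenvalue $2(d+1)$ strictly exceeds $d$, yielding a spectral gap. Decomposing $r_n = \sum_{j=1}^{d+1}\alpha_{j,n}\omega_j + s_n$ with $s_n \perp 1, \omega_1,\ldots,\omega_{d+1}$ in $L^2(\Sph^d)$, the quadratic form vanishes on the $\omega_j$-part and is equivalent to $\|\cdot\|^2$ on the $s_n$-part, so $\|s_n\|^2/\|r_n\|^2 \to 0$. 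By $L^2$-orthogonality of the $\omega_j$ and rotational symmetry, $\|\omega_j\|^2 = c_d$ is independent of $j$, and $\|r_n\|^2 = c_d|\alpha_n|^2 + \|s_n\|^2$ with $\alpha_n := (\alpha_{1,n},\ldots,\alpha_{d+1,n})$; in particular $|\alpha_n| > 0$ for large $n$.

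Finally, setting $\mu_n := |\alpha_n|$, $e_n := \alpha_n/|\alpha_n| \in \Sph^d$, and $R_n := s_n/\mu_n$, the identity $\sum_{j=1}^{d+1}\alpha_{j,n}\omega_j = |\alpha_n|(e_n\cdot\omega)$ gives the decomposition $u_n = \lambda_n(1 + \mu_n(e_n\cdot\omega + R_n))$, with $\mu_n\to 0$ (from $\|r_n\|\to 0$) and $\|R_n\|^2 = \|s_n\|^2/|\alpha_n|^2 \to 0$ (from $\|s_n\|^2/\|r_n\|^2\to 0$ and $\|r_n\|^2 \sim c_d|\alpha_n|^2$). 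The orthogonality of $R_n$ against $1$ and each $\omega_j$ is inherited from that of $s_n$. The only novelty relative to Lemma~\ref{seconddecomp} is bookkeeping: the angle $t_n \in \R/\Z$ parameterising the two-dimensional kernel is replaced by a unit vector $e_n \in \Sph^d$ parameterising the $(d+1)$-dimensional kernel, so no new analytical obstacle arises.
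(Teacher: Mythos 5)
Your proposal is correct and follows exactly the route the paper indicates: the paper's own proof of this lemma is a one‑paragraph remark saying the argument is the same as Lemmas \ref{seconddecomp} and \ref{seconddecompmfd}, with the quadratic form now being $\int_{\Sph^d}(|\nabla r|^2 - d r^2)\,d\omega$ and its kernel spanned by the degree‑one spherical harmonics $\omega_1,\ldots,\omega_{d+1}$. You have simply spelled out the details the paper leaves implicit — in particular the use of the spectral gap (next eigenvalue $2(d+1)>d$), the $H^1$‑orthogonality of the $\omega_j$ and of $s_n$ to the kernel and constants (which follows since the $\omega_j$ are eigenfunctions of $-\Delta_{\Sph^d}$), and the replacement of the phase $t_n\in\R/\Z$ by the direction $e_n=\alpha_n/|\alpha_n|\in\Sph^d$.
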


\begin{proof}
	The proof of this lemma is essentially the same as that of Lemmas \ref{seconddecomp} and \ref{seconddecompmfd}. The relevant quadratic form is now
	\begin{align*}
		& \int_{\Sph^d} \left( |\nabla r|^2 + \frac{d}{q-2} r^2\right)d\omega - (q-1) \frac{d}{q-2} \int_{\Sph^d} r^2\,d\omega 
		= \int_{\Sph^d} \left( |\nabla r|^2 - d r^2\right)d\omega \,.
	\end{align*}
	The kernel of this quadratic form is spanned by spherical harmonics of degree one, that is, by $\omega_1,\ldots,\omega_{d+1}$. Therefore we can argue as before.	
\end{proof}

\begin{lemma}\label{seconddecompenergysphere}
	Let $(u_n)\subset H^1(\Sph^d)$ be a sequence with $\|u_n\|^2 = Y$ and $\|u_n\|_q \to 1$. Then
	\begin{equation}
		\label{eq:seconddecompenergysphere}
		\liminf_{n\to\infty} \frac{\| u_n\|^2 \left( \|u_n\|^2 -Y \|u_n\|_q^2\right)}{\| u_n -\overline{u_n}\|^4} \geq \frac{(d+1)(q-2)(2d-q(d-2))}{2(d+2)(d+3)(q-1)} \,.
	\end{equation}
\end{lemma}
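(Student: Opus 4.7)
My approach mirrors the two-level Bianchi--Egnell strategy used in the proofs of Lemmas \ref{seconddecompenergy} and \ref{seconddecompenergymfd}: a Taylor expansion of $|u_n|^q$ combined with a spectral-gap peel-off of high-frequency modes, followed by a completion of the square in the degree-$2$ spherical harmonic component of the remainder.

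First, pass to a subsequence realizing the liminf in \eqref{eq:seconddecompenergysphere}. If $\liminf_n(\|u_n\|^2 - Y\|u_n\|_q^2)/\|u_n - \overline{u_n}\|^2 > 0$, the liminf is $+\infty$ and we are done; otherwise this quotient tends to $0$ and Lemma \ref{seconddecompsphere} applies, yielding $u_n = \lambda_n(1 + \mu_n(e_n\cdot\omega + R_n))$. By rotation invariance I may take $e_n = e_{d+1}$ and abbreviate $\eta(\omega) := \omega_{d+1}$.

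Since $q < 2d/(d-2) \le 4$ whenever $d \geq 4$, a direct quartic expansion of $|u_n|^q$ is not admissible without an $L^\infty$ bound on $R_n$. I mimic the device of the proof of Lemma \ref{seconddecompenergymfd} and decompose $R_n = S_n + T_n$, where $S_n$ is the $L^2$-projection of $R_n$ onto spherical harmonics of degree $2$ and $T_n$ onto degrees $\ge 3$. On the degree-$\ell$ subspace the operator $-\Delta_{\Sph^d} - d$ has eigenvalue $\ell(\ell+d-1) - d$, which equals $d+2$ at $\ell=2$ and is at least $2d+6$ for $\ell \ge 3$; this gives a uniform spectral gap on the $T_n$-subspace. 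Setting $\tilde u_n := \lambda_n(1 + \mu_n(\eta + S_n))$ and $u_n = \tilde u_n + \lambda_n\mu_n T_n$, a quadratic Taylor estimate of $|u_n|^q$ around $\tilde u_n$ (the linear-in-$T_n$ term is negligible because $T_n$ is orthogonal to the low-degree part of $|\tilde u_n|^{q-2}\tilde u_n$) combined with the spectral gap yields, exactly as in Step 2 of the proof of Lemma \ref{seconddecompenergymfd},
$$
\|u_n\|^2 - Y\|u_n\|_q^2 \ge \|\tilde u_n\|^2 - Y\|\tilde u_n\|_q^2 + o(\mu_n^4),
$$
together with $\|u_n - \overline{u_n}\|^2 = \|\tilde u_n - \overline{\tilde u_n}\|^2(1+o(1))$ and $\|u_n\|^2 \ge \|\tilde u_n\|^2$. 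This reduces matters to $\tilde u_n$.

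Now, because $S_n$ lies in the finite-dimensional space of degree-$2$ harmonics, $\|S_n\|_\infty \lesssim \|S_n\| \to 0$, so $\mu_n(\eta + S_n) \to 0$ uniformly and the fourth-order Taylor expansion of $|\tilde u_n|^q$ proceeds as in Step 2 of the proof of Lemma \ref{seconddecompenergy}. Using the standard moments
$$
\int_{\Sph^d}\eta^2\,d\omega = \frac{|\Sph^d|}{d+1}, \qquad \int_{\Sph^d}\eta^4\,d\omega = \frac{3|\Sph^d|}{(d+1)(d+3)},
$$
and the vanishing integrals $\int\eta = \int\eta^3 = \int\eta\, S_n = 0$ (the last by $L^2$-orthogonality of harmonics of different degrees), the expansion takes the form
$$
\lambda_n^{-2}\bigl(\|\tilde u_n\|^2 - Y\|\tilde u_n\|_q^2\bigr) = \mu_n^2 Q[S_n] + \mu_n^3 L[S_n] + \mu_n^4 K + o(\mu_n^4),
$$
where $Q[S_n]$ is the positive quadratic form induced on degree-$2$ harmonics by $-\Delta - d$ (positivity from the eigenvalue $d+2 > 0$), $L[S_n]$ is the pairing with the degree-$2$ part $\eta^2 - \frac{1}{d+1}$ of $\eta^2$ (which by $\int\eta^4 - |\Sph^d|/(d+1)^2$ has squared norm $2d|\Sph^d|/((d+1)^2(d+3))$), and $K$ assembles the explicit quartic contributions from $\|\eta\|_4^4$ and $\|\eta\|_2^4$. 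Completing the square in $S_n$ saturates at $S_n$ proportional to $\eta^2 - \frac{1}{d+1}$ and produces a negative contribution $-\mu_n^4 L^2/(4Q)$; dividing the resulting quartic coefficient $K - L^2/(4Q)$ by $\|u_n-\overline{u_n}\|^4 = \lambda_n^4\mu_n^4\|\eta\|^4(1+o(1))$ and using $\|u_n\|^2 \to Y$ should yield the stated constant $(d+1)(q-2)(2d-q(d-2))/(2(d+2)(d+3)(q-1))$.

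The main obstacle is the bookkeeping in the last step: one must verify that the gain $K$ strictly exceeds the loss $L^2/(4Q)$, which is the \emph{secondary nondegeneracy condition}. The factor $2d - q(d-2)$ appearing in the numerator of the claimed constant is precisely the margin of this excess; it vanishes at the conformally critical exponent $q = 2d/(d-2)$, where additional zero modes from the conformal group appear and quartic stability must degrade further.
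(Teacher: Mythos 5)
Your overall strategy (peel off a high-frequency piece $T_n$ by a second-order expansion and spectral gap, then quartically expand the remaining low-frequency part and complete the square in the degree-$2$ zonal coefficient) matches the paper's, and the final bookkeeping of moments and constants is the same. The genuine difference lies in \emph{how you define} the splitting $R_n = S_n + T_n$. The paper takes $S_n$ to be the average of $R_n$ over the $\Sph^{d-1}$ slices, so that $S_n$ depends only on $\omega_{d+1}$, $T_n$ integrates to zero against \emph{every} function of $\omega_{d+1}$ (making the linear-in-$T_n$ term vanish identically), and the whole expansion for $\tilde u_n$ reduces to a one-variable computation. The price is that $\|S_n\|\to0$ does not force $\|S_n\|_\infty\to0$, because the weight $\sin^{d-1}\theta$ degenerates at the poles; the paper then has to split $\Sph^d$ into a polar cap and a bulk region, Taylor-expand to fourth order only in the bulk, and estimate the cap contribution separately (Steps 6--7), with an additional awkwardness at $d=2$.

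You instead take $S_n$ to be the $L^2$-projection of $R_n$ onto degree-$2$ spherical harmonics and $T_n$ the degree $\geq3$ part. This buys you $\|S_n\|_\infty\lesssim\|S_n\|\to0$ for free (finite dimensionality), so $\mu_n(\eta+S_n)\to0$ uniformly and the quartic Taylor expansion of $|\tilde u_n|^q$ is immediate for all $d\geq2$ and all $2<q<2d/(d-2)$, with no cap/bulk split and no special treatment of $d=2$. The price you pay, which you gloss over slightly, is that the linear-in-$T_n$ term no longer vanishes identically: $|\tilde u_n|^{q-2}\tilde u_n$ has small but nonzero components of degree $\geq3$ coming from $\mu_n^2\eta S_n$, $\mu_n^2 S_n^2$, $\mu_n^3\eta^3$, etc. One has to check that the resulting term is $\mathcal O\bigl((|\mu_n|^3\|S_n\|+|\mu_n|^4)\|T_n\|\bigr)$, which after a Young absorption into the coercive quadratic $\mu_n^2\|T_n\|^2$ piece contributes only $o(\mu_n^4)$ and does not perturb the constant. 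Once this is made precise, your route is valid and in fact cleaner than the paper's for $q<4$; your completion-of-the-square computation then reproduces the stated constant, since only the coefficient of $Y_{2,0}\propto\omega_{d+1}^2-\tfrac1{d+1}$ in $S_n$ couples to the linear term $\mu_n\int g^2 R_n\,d\omega$.

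Two small points worth tightening if you write this up: (i) state and verify the estimate on the residual linear-in-$T_n$ term explicitly rather than calling it ``negligible,'' since its negligibility is exactly where the different choice of $S_n$ has to earn its keep; (ii) the phrase ``$q<2d/(d-2)\le4$ whenever $d\geq4$'' leaves the impression that only $d\geq4$ needs care, but your method applies uniformly (including $d=2,3$ with $q\geq4$, where a direct quartic expansion would also work), so you may as well say so.
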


Note that the expression on the right side is positive since $q<2d/(d-2)$. Its vanishing for $q=2d/(d-2)$ if $d\geq 3$ is consistent with the fact that in the Bianchi--Egnell inequality \eqref{eq:bianchiegnell} (and in its equivalent sphere version), one takes the infimum over the $(d+2)$-dimensional manifold of optimizers, whereas for $q<2d/(d-2)$ we are taking the infimum only over the one-dimensional set of constants. Note also that the constant in \eqref{eq:seconddecompenergysphere} coincides with the corresponding expression in Lemma \ref{seconddecompenergy} for $d=1$.

\begin{proof}
	\emph{Step 1.} The proof is similar to those of Lemmas \ref{seconddecompenergy} and \ref{seconddecompenergysphere}. As in those proofs we can pass to a subsequence along which the liminf in \eqref{eq:seconddecompenergysphere} is realized and we may assume that $\liminf_{n\to\infty} \left( \|u_n\|^2 - Y \|u_n\|_q^2\right) / \| u_n -\overline{u_n}\|^2=0$.
	
	By Lemma \ref{seconddecompsphere}, after passing to a subsequence and after a rotation, we can write
	\begin{equation}
		\label{eq:seconddecompsphereproof}
		u_n = \lambda_n \left( 1 + \mu_n \left( g + R_n \right) \right),
	\end{equation}
	where $\lambda_n$, $\mu_n$ and $R_n$ are as in that lemma and $g(\omega)=\omega_{d+1}$.
	
	\medskip
	
	\emph{Step 2.} We now restrict ourselves to the simpler case where $d=2,3$ and $4\leq q<2d/(d-2)$. Then we can expand $|1+\tau|^q$ to fourth order in $\tau$ and obtain as in the proof of Lemma~\ref{seconddecompenergy}, recalling the orthogonality conditions,
	\begin{align*}
		\| u_n\|_q^q & = |\lambda_n|^q \left( |\Sph^d| + \frac12 q(q-1) \mu_n^2 \left(\|g\|_2^2 + \|R_n\|_2^2\right) + \frac12 q(q-1)(q-2) \mu_n^3 \int_{\Sph^d} g^2 R_n\,d\omega \right. \\
		& \qquad\qquad \left. + \frac1{24} q(q-1)(q-2)(q-3) \mu_n^4 \|g\|_4^4 + \mathcal O( |\mu_n|^3 \| R_n\|^2 + |\mu_n|^5) \right).
	\end{align*}
	Consequently,
	\begin{align*}
		\| u_n\|_q^{2} & = \lambda_n^{2} |\Sph^d|^{2/q} \left( 1 + (q-1) \mu_n^2 |\Sph^d|^{-1} \left(\|g\|_2^2 + \|R_n\|_2^2\right) \right. \\
		& \qquad\quad + (q-1)(q-2) \mu_n^3 |\Sph^d|^{-1} \int_{\Sph^d} g^2 R_n\,d\omega \\
		& \qquad\quad + \frac1{12} (q-1)(q-2)(q-3) \mu_n^4 |\Sph^d|^{-1} \|g\|_4^4 - \frac14(q-2)(q-1)^2 \mu_n^4 |\Sph^d|^{-2} \|g\|_2^4	\\
		& \qquad\quad \left. +\ \mathcal O( |\mu_n|^3 \| R_n\|^2 + |\mu_n|^5) \right).
	\end{align*}
	On the other hand, because of the orthogonality conditions,
	$$
	\| u_n \|^2 = \lambda_n^2 \left( \frac{d}{q-2}\,|\Sph^d| + \mu_n^2 \|g\|^2 + \mu_n^2 \|R_n\|^2 \right).
	$$
	Putting this together, we obtain
	\begin{align*}
		& \lambda_n^{-2} \left( \|u_n\|^2 - Y \|u_n\|_q^2 \right) = \mu_n^2 \left( \|g\|^2 - \frac{d(q-1)}{q-2} \|g\|_2^2 \right) \\
		& \quad + \mu_n^2 \left( \|R_n\|^2 - \frac{d(q-1)}{q-2} \|R_n\|_2^2
		- d (q-1) \mu_n \int_{\Sph^d} g^2 R_n\,d\omega \right) \\
		& \quad + \mu_n^4 \left( \frac d4 (q-1)^2 |\Sph^d|^{-1} \|g\|_2^4 - \frac d{12} (q-1)(q-3) \|g\|_4^4 \right) \\
		& \quad + \mathcal O( |\mu_n|^3 \| R_n\|^2 + |\mu_n|^5) \,.
	\end{align*}
	Using
	\begin{equation}
		\label{eq:sphcomp}
		\frac1d \int_{\Sph^d} |\nabla g|^2\,d\omega = \int_{\Sph^d} g^2\,d\omega = \frac1{d+1}|\Sph^d| \,,
	\qquad
	\int_{\Sph^d} g^4 \,d\omega = \frac3{(d+1)(d+3)}|\Sph^d| \,,
	\end{equation}
	we can simplify this expansion to
	\begin{align*}
		\lambda_n^{-2} \left( \|u_n\|^2 - Y \|u_n\|_q^2 \right) & = \mu_n^2 \left( \! \|R_n\|^2 \!- \frac{d(q-1)}{q-2} \|R_n\|_2^2
		- \! d (q-1) \mu_n \int_{\Sph^d} \! g^2 R_n\,d\omega \right) \\
		& \quad + \mu_n^4\, \frac{d(q-1)(q+d)}{2(d+1)^2(d+3)}\,|\Sph^d| +\ \mathcal O( |\mu_n|^3 \| R_n\|^2 + |\mu_n|^5) \,.
	\end{align*}
	
	\emph{Step 3.} It remains to get a lower bound on the term that is quadratic plus linear in $R_n$. We expand $R_n$ into spherical harmonics
	$$
	R_n(t) = \sum_{\ell=2}^\infty \sum_m a_{\ell,m} Y_{\ell,m} \,.
	$$
	Here, for each $\ell$, $(Y_{\ell,m})_m$ is an $L^2(\Sph^d)$-orthonormal basis of (real) spherical harmonics of degree $\ell$. The index $m$ runs through a finite set whose cardinality depends on $\ell$, but which will not be important for us. The only thing we will use is that the space of spherical harmonics of degree zero is spanned by constant functions and that of degree one by $\omega_1,\ldots,\omega_{d+1}$.
	
	For notational simplicity, we do not reflect the dependence of the $a_{\ell,m}$ on $n$. Note that by the orthogonality conditions there are no terms involving $\ell=0$ and $\ell=1$.	We have
	$$
	\int_{\Sph^d} |\nabla R_n|^2\,d\omega = \sum_{\ell=2}^\infty \sum_m \ell(\ell+d-1) a_{\ell,m}^2
	\qquad \text{and} \qquad
	\int_{\Sph^d} R_n^2\,d\omega = \sum_{\ell=2}^\infty \sum_m a_{\ell,m}^2 \,.
	$$
	Moreover, since $\omega_{d+1}^2 - 1/(d+1)$ is a spherical harmonic of degree two and since, by \eqref{eq:sphcomp},
	$$
	\int_{\Sph^d} (\omega_{d+1}^2 - 1/(d+1))^2\,d\omega = \frac{2d}{(d+1)^2 (d+3)}\,|\Sph^d| \,,
	$$
	we can assume, without loss of generality, that
	$$
	Y_{2,0}(\omega) = \sqrt{\frac{(d+1)^2(d+3)}{2d\,|\Sph^d|}} \left( \omega_{d+1}^2 - \frac 1{d+1} \right).
	$$
	Thus, since $\overline{R_n}=0$,
	$$
	\int_{\Sph^d} g^2 R_n \,d\omega = \int_{\Sph^d} (\omega_{d+1}^2 - 1/(d+1)) R_n \,d\omega
	= \sqrt{\frac{2d\,|\Sph^d|}{(d+1)^2(d+3)}} a_{2,0} \,.
	$$
	Therefore,
	\begin{align*}
		& \|R_n\|^2 - \frac{d(q-1)}{(q-2)} \|R_n\|_2^2 - d (q-1) \mu_n \int_{\Sph^d} g^2 R_n\,d\omega - C |\mu_n| \|R_n\|^2 \\
		& = \sum_{\ell=2}^\infty \sum_m \left( \ell(\ell+d-1) - d \right) a_{\ell,m}^2 - d(q-1) \mu_n \sqrt{\frac{2d\,|\Sph^d|}{(d+1)^2(d+3)}} a_{2,0}  \\
		& \quad - C |\mu_n| \sum_{\ell=2}^\infty \sum_m \left( \ell(\ell+d-1) + \frac d{q-2} \right) a_{\ell,m}^2 \\
		& = \left( \left( d+2 - C |\mu_n|\left( 2(d+1) + \frac d{q-2} \right) \right) a_{2,0}^2 - 
		d(q-1) \mu_n \sqrt{\frac{2d\,|\Sph^d|}{(d+1)^2(d+3)}} a_{2,0}  \right. \\
		& \quad + \sum_{m\neq 2} \left( d+2 - C |\mu_n|\left( 2(d+1) + \frac d{q-2} \right) \right) a_{2,m}^2 \\
		& \quad \left. + \sum_{\ell=3}^\infty \sum_m \left( \ell(\ell+d-1) - d - C|\mu_n| \left( \ell(\ell+d-1) + \frac d{q-2}\right) \right) a_{\ell,m}^2 \right).
	\end{align*}
	Since $\mu_n\to 0$, we have for $n$ large enough, uniformly in $\ell\geq 2$,
	$$
	\ell(\ell+d-1) - d - C|\mu_n| \left( \ell(\ell+d-1) + \frac d{q-2}\right) > 0 \,.
	$$
	Under this assumption and abbreviating
	$$
	\rho_n := d+2 - C |\mu_n|\left( 2(d+1) + \frac d{q-2} \right)>0 \,,
	$$
	we can bound
	\begin{align*}
		& \|R_n\|^2 - \frac{d(q-1)}{(q-2)} \|R_n\|_2^2 - d (q-1) \mu_n \int_{\Sph^d} g^2 R_n\,d\omega - C |\mu_n| \|R_n\|^2 \\
		& \geq \rho_n a_{2,0}^2 - d(q-1) \mu_n \sqrt{\frac{2d\,|\Sph^d|}{(d+1)^2(d+3)}} a_{2,0} \\
		& = \rho_n \left( \left( a_{2,0} - \frac{d(q-1)}{2\rho_n} \sqrt{\frac{2d\,|\Sph^d|}{(d+1)^2(d+3)}} \mu_n \right)^2 
		- \frac{d^2(q-1)^2}{4\rho_n^2} \frac{2d\,|\Sph^d|}{(d+1)^2(d+3)} \mu_n^2 \right) \\
		& \geq - \frac{d^2(q-1)^2}{4\rho_n} \frac{2d\,|\Sph^d|}{(d+1)^2(d+3)} \mu_n^2 = 
		- \frac{d^2(q-1)^2}{4(d+2)} \frac{2d\,|\Sph^d|}{(d+1)^2(d+3)} \mu_n^2 + \mathcal O(|\mu_n|^3) \,.
	\end{align*}
	
	To summarize, we have shown that
	\begin{align*}
		\lambda_n^{-2} \left( \|u_n\|^2 - Y \|u_n\|_q^2 \right)
		& \geq \mu_n^4 \left( \frac{d(q-1)(q+d)}{2(d+1)^2(d+3)} - \frac{d^3(q-1)^2}{2(d+1)^2(d+2)(d+3)} \right) |\Sph^d| \\
		& \quad + \mathcal O(|\mu_n|^5) \\
		& = \mu_n^4\, \frac{d(q-1)(2d-(d-2)q)}{2(d+1)(d+2)(d+3)} |\Sph^d| + \mathcal O(|\mu_n|^5) \,.
	\end{align*}
	
	On the other hand, we have, by the orthogonality conditions and \eqref{eq:sphcomp},
	\begin{align*}
		\mu_n^4 & = \frac{\|u_n - \overline{u_n} \|^4}{\lambda_n^4 (\|g\|^2 + \|R_n\|^2)^2}  
		= \frac{(d+1)^2}{(q-1)^2 Y^2}\, \|u_n - \overline{u_n}\|^4  \left( 1+ o(1) \right).
	\end{align*}
	Inserting this into the previous bound, we get the claimed asymptotic inequality. This completes the proof in the case $4\leq q<2d/(d-2)$.
	
	\medskip
	
	\emph{Step 4.} In the remainder of the proof we deal with the technical problems arising in the case where $q<4$. Just like in the proof of Lemma \ref{seconddecompenergymfd}, the problem is the expansion of $|1+\tau|^q$ to fourth order in $\tau$, for which we need $\mu_n(g+R_n)$ to tend to zero in $L^\infty$. While this may, in general, not be the case, in the proof of Lemma \ref{seconddecompenergymfd} we got around this problem by noting that the $L^\infty$ convergence holds for the spherical mean and the remainder can be controlled by a spectral gap estimate.
	
	In the present situation we will try to adapt the same proof and also argue by integrating out variables, but the new difficulty will be that the resulting one-dimensional function does not converge in $L^\infty$ uniformly over its interval of definition. This problem can be overcome by dealing with the boundary and the bulk separately.
	
	To be more specific, consider $u_n$ as in \eqref{eq:seconddecompsphereproof} and then further decompose
	$$
	R_n = S_n + T_n
	\qquad\text{with}\qquad
	S_n(\omega_{d+1}) := |\Sph^{d-1}|^{-1} \int_{\Sph^{d-1}} R_n(\sqrt{1-\omega_{d+1}^2}\theta,\omega_{d+1})\,d\theta \,.
	$$
	In words, $S_n$ is obtained from $R_n$ by averaging over the spheres $\{ ((1-\omega_{d+1}^2)^{1/2}\theta,\omega_{d+1})\in\Sph^d:\ \theta\in\Sph^{d-1} \}$ orthogonal to the $e_{d+1}$-axis, parametrized by their height $\omega_{d+1}$. The function $T_n$ has the property that for any function $\phi$ of $\omega_{d+1}$ alone,
	\begin{equation}
		\label{eq:meanzerotsphere}
		\int_{\Sph^d} \phi(\omega_{d+1}) T_n\,d\omega = 0 \,.
	\end{equation}
	By orthogonality,
	$$
	\| R_n \|^2 = \|S_n\|^2 + \|T_n\|^2 \,,
	$$
	so $\|R_n\|\to 0$ implies $\|S_n\|\to 0$. The difficulty compared to the proof of Lemma \ref{seconddecompenergymfd} is that this does not imply that $\|S_n\|_\infty \to 0$. To be more explicit,
	$$
	\|S_n\|^2 = |\Sph^{d-1}| \int_0^\pi \left( (\partial_\theta (S_n(\cos\theta)))^2 + \frac{d}{q-2} S_n(\cos\theta)^2 \right) \sin^{d-1}\theta\,d\theta
	$$
	and we note that the weight $\sin^{d-1}\theta$ degenerates at the boundary $\theta\in\{0,\pi\}$. Before dealing with this problem, we get rid of the term $T_n$ essentially in the same way as in the proof of Lemma \ref{seconddecompenergymfd}.
	
	\medskip
	
	\emph{Step 5.} Let us set
	$$
	u_n = \tilde u_n + \lambda_n \mu_n T_n
	\qquad\text{with}\qquad
	\tilde u_n := \lambda_n \left( 1 + \mu_n (g + S_n) \right).
	$$
	Then by an expansion to second order, similarly as before,	
	\begin{align*}
		& \lambda_n^{-2} \left( \|u_n\|^2 - Y \| u_n\|_q^2 \right) = \lambda_n^{-2} \left( \|\tilde u_n\|^2 - Y \|\tilde u_n\|_q^2 \right) \\
		& \quad + \mu_n^2 \left( \|T_n\|^2  - (q-1) Y |\Sph^d|^{-1+2/q} \int_{\Sph^d} T_n^2\,d\omega + \mathcal O(|\mu_n|^{\min\{1,q-2\}} \|T_n\|_q^2) \right).
	\end{align*}
	Just like in the proof of Lemma \ref{seconddecompsphere}, the term quadratic in $T_n$ involves the operator $-\Delta_{\Sph^d} -d$. The kernel of this operator is spanned by $\omega_1,\ldots,\omega_{d+1}$ and its negative spectral subspace is spanned by constants. We claim that $T_n$ is orthogonal to all these functions. For constants and $\omega_{d+1}$ this follows from \eqref{eq:meanzerotsphere}, and for $\omega_1,\ldots,\omega_d$ it follows from the fact that both $R_n$ and $S_n$ are orthogonal to these. As a consequence of the orthogonality relations, we have
	$$
	\|T_n\|^2  - (q-1) Y |\Sph^d|^{-1+2/q} \int_{\Sph^d} T_n^2\,d\omega \gtrsim \|T_n\|^2
	$$
	with an implicit constant depending only on $d$. Thus, if $n$ is large enough, the error term $\mathcal O(|\mu_n|^{\min\{1,q-2\}} \|T_n\|_q^2)$ can be absorbed and we conclude that
	$$
	\|u_n\|^2 - Y \| u_n\|_q^2 \geq \|\tilde u_n\|^2 - Y \|\tilde u_n\|_q^2 \,.
	$$
	Continuing to argue as in the proof of Lemma \ref{seconddecompenergymfd} we arrive at
	$$
	\frac{\| u_n\|^2 \left( \| u_n\|^2 - Y\| u_n\|_q^2\right)}{\|  u_n - \overline{ u_n}\|^4}
	\geq \frac{\| \tilde u_n\|^2 \left( \|\tilde u_n\|^2 - Y\|\tilde u_n\|_q^2\right)}{\| \tilde u_n - \overline{\tilde u_n}\|^4}  \left( 1 + o(1)\right).
	$$
	This accomplishes our goal of removing the term $T_n$.
	
	\medskip
	
	\emph{Step 6.} It remains to deal with the failure of $L^\infty$ convergence of $S_n$. We first assume that $d>2$ to present the argument in the cleanest way. Then, for any function $v\in H^1(\Sph^d)$ that depends only on $\omega_{d+1}$,
	\begin{equation}
		\label{eq:linftysphere}
		|v(\omega)| \lesssim \delta(\omega)^{-(d-2)/2} \|v\| \,,
		\ \text{where}\ \delta(\omega) := \dist(\omega,\!\{ (0,\ldots,0,+1),\! (0,\ldots,0,-1)\}\!) \,.
	\end{equation}
	This follows, for instance, from the well-known inequality, valid for all radial $w\in\dot H^1(\R^d)$,
	$$
	|w(x)| \lesssim |x|^{-(d-2)/2} \|\nabla w\|_2 \,.
	$$
	Indeed, in obvious notation,
	$$
	|w(r)| = \left| \int_r^\infty w'(s)\,ds \right| \leq \left( \int_r^\infty s^{-d+1}\,ds \ \int_r^\infty (w'(s))^2 s^{d-1}\,ds \right)^{1/2}.
	$$
	This implies \eqref{eq:linftysphere} either by a localization argument or by stereographic projection.
	
	As a consequence of \eqref{eq:linftysphere}, there is a constant $C>0$, depending only on $d$, such that if $\delta(\omega)\geq C |\mu_n|^{2/(d-2)}$, then $|\mu_n(g(\omega)+ S_n(\omega))| \leq 1/2$. (Here we also used $\|g+S_n\|\lesssim 1$.) Thus, if we set
	$$
	\mathcal C := \left\{ \omega\in\Sph^d:\ \delta(\omega)< C |\mu_n|^{2/(d-2)} \right\},
	$$
	then, by the same arguments as in the proof of Lemma \ref{seconddecompenergy},
	\begin{align*}
		& |\lambda_n|^{-q} \int_{\Sph^d\setminus\mathcal C} |\tilde u_n|^q\,d\omega
		= \int_{\Sph^d\setminus\mathcal C} \left( 1+ q\mu_n(g+S_n) + \frac12 q(q-1)\mu_n^2 (g+S_n)^2 \right)d\omega \\
		& \qquad + \int_{\Sph^d\setminus\mathcal C} \left( \frac16 q(q-1)(q-2) \mu_n^3 (g^3 + 3 g^2 S_n) + \frac1{24} q(q-1)(q-2)(q-3) \mu_n^4 g^4 \right)d\omega \\
		& \qquad + \mathcal O(|\mu_n|^3 \|S_n\|^2 + |\mu_n|^5) \,.
	\end{align*}
	On the other hand, in $\mathcal C$ we expand to second order,
	\begin{align*}
		& |\lambda_n|^{-q} \int_{\mathcal C} |\tilde u_n|^q\,d\omega
		= \int_{\mathcal C} \left( 1+ q\mu_n(g+S_n) + \frac12 q(q-1)\mu_n^2 (g+S_n)^2 \right)d\omega \\
		& \qquad + \mathcal O\left( |\mu_n|^{\min\{3,q\}} \int_{\mathcal C} |g+S_n|^{\min\{3,q\}}\,d\omega
		+ |\mu_n|^{q} \int_{\mathcal C} |g+S_n|^{q}\,d\omega \right).
	\end{align*}
	Let us bound the remainder term. We let $1\leq p \leq 2^*=2d/(d-2)$. (We will later choose $p=\min\{3,q\}$ and $p=q$.) Using the fact that $g$ is bounded and that $H^1$ embeds into $L^{2^*}$, we obtain
	\begin{align*}
		|\mu_n|^p \!\int_{\mathcal C} |g+S_n|^p \,d\omega
		& \lesssim |\mu_n|^p \!\left( \int_{\mathcal C} |g|^p\,d\omega +\! \int_{\mathcal C} |S_n|^p\,d\omega\! \right)
		\lesssim |\mu_n|^p \left( |\mathcal C| + |\mathcal C|^{(2^*-p)/2^*} \|S_n\|_{2^*}^p \right) \\
		& \sim |\mu_n|^{p+2^*} + |\mu_n|^{2^*} \|S_n\|_{2^*}^p \,.
	\end{align*}
	Here we used $|\mathcal C| \sim |\mu_n|^{2^*}$. A similar argument shows that
	\begin{align*}
		& \int_{\mathcal C} \left( \frac16 q(q-1)(q-2) \mu_n^3 (g^3 + 3 g^2 S_n) + \frac1{24} q(q-1)(q-2)(q-3) \mu_n^4 g^4 \right)d\omega \\
		& = \mathcal O( |\mu_n|^{3+2^*} + |\mu_n|^{2+2^*} \|S_n\|_{2^*}) \,.
	\end{align*}
	To summarize, we have
	\begin{align*}
		& |\lambda_n|^{-q} \int_{\mathcal C} |\tilde u_n|^q\,d\omega
		= \int_{\mathcal C} \left( 1+ q\mu_n(g+S_n) + \frac12 q(q-1)\mu_n^2 (g+S_n)^2 \right)d\omega \\
		& \qquad + \int_{\mathcal C} \left( \frac16 q(q-1)(q-2) \mu_n^3 (g^3 + 3 g^2 S_n) + \frac1{24} q(q-1)(q-2)(q-3) \mu_n^4 g^4 \right)d\omega \\
		& \qquad + \mathcal O( |\mu_n|^{\min\{3,q\}+2^*} + |\mu_n|^{2^*} \|S_n\|_{2^*}^{\min\{3,q\}} + |\mu_n|^{2+2^*} \|S_n\|_{2^*} ) \,. 
	\end{align*}
	Adding this to the expansion on $\Sph^d\setminus\mathcal C$ and using the orthogonality conditions, we finally obtain
	\begin{align*}
		|\lambda_n|^{-q} \|\tilde u_n\|_q^q & = |\Sph^d| + \frac12 q(q-1) \mu_n^2 \left( \|g\|_2^2 + \|S_n\|_2^2 \right) + \frac12 q(q-1)(q-2) \mu_n^3 \int_{\Sph^d} g^2 S_n\,d\omega \\
		& \quad + \frac1{24} q(q-1)(q-2)(q-3) \mu_n^4 \| g\|_4^4 \\
		& \quad + \mathcal O( |\mu_n|^3 \|S_n\|^2  + |\mu_n|^{2^*} \|S_n\|_{2^*}^{\min\{3,q\}}
		+ |\mu_n|^5 + |\mu_n|^{\min\{3,q\}+2^*} ) \,.
	\end{align*}
	Here we slightly simplified the error terms, using $|\mu_n|^{2+2^*} \|S_n\|_{2^*}\lesssim |\mu_n|^3 \|S_n\|^2 + |\mu_n|^{1+2\cdot 2^*}$ and $1+2\cdot 2^*> 5$.
	
	The upshot is that we have almost the same bound as in the case $q\geq 4$, except that the remainder $|\mu_n|\|R_n\|^2$ there is now replaced by $|\mu_n|^3 \|S_n\|^2  + |\mu_n|^{2^*} \|S_n\|_{2^*}^{\min\{3,q\}}$ and the remainder $|\mu_n|^5$ there is now replaced by $|\mu_n|^5 + |\mu_n|^{\min\{3,q\}+2^*}$. These replacements, however, do not affect the proof. Indeed, the only thing that was important about the first remainder was that it was $o(\mu_n^2) \|R_n\|^2$ and about the second remainder that is was $o(\mu_n^4)$. This is satisfied in the present case and therefore one can proceed in the same way as before. 
	
	\medskip
	
	\emph{Step 7.} Finally, we briefly address the necessary changes for $d=2$. In this case, inequality \eqref{eq:linftysphere} holds only with $\delta(\omega)^{-\alpha}$ for arbitrarily small $\alpha>0$, but not with $\alpha=0$. Moreover, $H^1$ is embedded into $L^r$ for arbitrary large $r<\infty$, but not for $r=\infty = 2^*$. Thus, if one follows the above proof, these two issues imply that the remainder estimates in the expansion of $\|\tilde u_n\|_q^q$ become worse by a factor $|\mu_n|^{-\epsilon}$ for arbitrarily small $\epsilon>0$. This, however, is still enough to conclude the proof along the same lines.
\end{proof}

\begin{remark}\label{optimality3}
	The bound in Lemma \ref{seconddecompenergysphere} is best possible, both with respect to the power four and with respect to the constant on the right side. Indeed, it is saturated as $\epsilon\to 0$ for $u_\epsilon = 1 + \epsilon g + \epsilon^2 h$ with $h(\omega_{d+1}) := (d(q-1)/(2(d+2))) (\omega_{d+1}^2 -1/(d+1))$. In the notation of the previous proof, this corresponds to $\mu_\epsilon=\epsilon$ and $R_\epsilon = \epsilon (h+o(1))$. The function $h$ is chosen in such a way that the square that is completed in the previous proof (Step 3) vanishes to leading order.	
\end{remark}

Theorem \ref{main3} follows from Lemma \ref{seconddecompenergysphere} in the same way as Theorem \ref{main} follows from Lemma \ref{seconddecompenergy}. We omit the details.



\bibliographystyle{amsalpha}

\end{document}